\newcommand{\bbQ}{{\mathbb{Q}}}
\newcommand{\bbC}{{\mathbb{C}}}
\newcommand{\bbP}{{\mathbb{P}}}
\newcommand{\bbR}{{\mathbb{R}}}
\newcommand{\bbZ}{{\mathbb{Z}}}
\newcommand{\supp}{{\mathrm{supp}}}
\newcommand{\calU}{{\mathcal{U}}}
\newcommand{\calC}{{\mathcal{C}}}
\numberwithin{equation}{section}
\newtheorem{Prop}[equation]{Proposition}
\newtheorem{Lem}[equation]{Lemma}
\newtheorem{Def}[equation]{Definition}
\newtheorem{Thm}[equation] {Theorem}
\newtheorem{Cor}[equation]{Corollary}
\newtheorem{Conj}[equation]{Conjecture}
\title
[Models of $X_0(N)$]
{On Primitive Elements of Algebraic Function Fields and Models of $X_0(N)$}
\author{Iva Kodrnja}
\address{
	Faculty of Civil Engineering, 
	University of Zagreb,
	Ka\v ci\' ceva 26, 10000 Zagreb,
	Croatia}
\email{ikodrnja@grad.hr}
\author{ Goran Mui\'c}
\address{
Department of Mathematics, 
University of Zagreb,
Bijeni\v cka 30, 10000 Zagreb,
Croatia}
 \email{gmuic@math.hr}
\begin{document}

\begin{abstract}
  This paper is a continuation of our previous works where we study maps from  $X_0(N)$, $N\ge 1$, into $\mathbb P^2$ constructed via modular forms of the same weight and criteria that
  such a map is birational (see \cite{Muic2}). In the present paper our approach is based on the theory of primitive elements in finite separable field extensions. We prove that in most of the cases
  the constructed maps are birational. We consider particular cases and their equations  in $\mathbb P^2$. 
 \end{abstract}

\subjclass{11F11, 11F23}
\keywords{modular forms,  modular curves, birational equivalence, primitive elements}

\maketitle

\section{Introduction} \label{intro}

Let $\Bbb H$ be the complex upper half-plane with the $SL_2(\Bbb R)$-invariant hyperbolic measure defined by $dxdy/y^2$, where the coordinates on $\Bbb H$ are written as
$z=x+\sqrt{-1}y,\: y>0$. Let $\Gamma$ be a Fuchsian group of the first kind \cite[Section 1.7, page 28]{Miyake}. By a theorem of Siegel \cite[Theorem 1.9.1]{Miyake}, a
discrete subgroup $\Gamma$ of
$SL_2(\mathbb{R})$
is  a Fuchsian group of the first kind if and only if  the hyperbolic volume of the quotient $\Gamma \setminus \Bbb H $ is finite:
$\iint_{\Gamma \setminus \Bbb H} \frac{dxdy}{y^2}<\infty$.  Examples of such groups are the important modular groups such as  $SL_2(\Bbb Z)$ and its congruence subgroups
$\Gamma_0(N)$,  $\Gamma_1(N)$, and $\Gamma(N)$  \cite[Section 4.2]{Miyake}.

The quotient $\Gamma\backslash \Bbb H$ can be compactified by adding a finite number of $\Gamma$-orbits of points in $\Bbb R\cup \{\infty\}$ called cusps of $\Gamma$ and we obtain a compact
Riemann surface which will be denoted by $\mathfrak{R}_\Gamma$. As it is an irreducible complete smooth algebraic curve, we are interested in finding its plane realizations.
Various aspects of modular curves has
been studied in \cite{BKS}, \cite{bnmjk}, \cite{sgal}, \cite{ishida}, \cite{Muic}, \cite{MuMi}, \cite{mshi} and \cite{yy}. 
We continue the approach presented in \cite{Muic1}, \cite{Muic2}, and \cite{Kodrnja1},  and start by introducing the notation and briefly repeat some results from \cite{Muic2}.
In the present paper our approach is based on the theory of primitive elements in finite separable field extensions (see \cite{ynt}, and \cite[Section 6.10]{w}).
We especially rely on the theory and  criteria for birationality
developed in \cite{Muic2}.

Assume that $\Gamma$ has at least one cusp. Let $g(\Gamma)$ be the genus of $\mathfrak{R}_\Gamma$.
For $m\geq 2$  an even integer, let $M_m(\Gamma)$ (resp., $S_m(\Gamma)$) be the space of (resp. cuspidal) modular forms of weight $m$ for $\Gamma$.  Assume $\dim M_m(\Gamma)\geq 3$.
Let $f,g,h$ be three linearly independent modular forms in $M_m(\Gamma)$. Then, we define a  holomorphic map $\mathfrak{R}_\Gamma\to \Bbb P^2$ by 
\begin{equation}\label{map}
\mathfrak{a}_z\longmapsto (f(z):g(z):h(z)).
\end{equation}
Since  $\mathfrak{R}_\Gamma$ has a canonical structure of complex projective irreducible algebraic curve, this map can be regarded as a regular map between projective varieties. Consequently, the image
is an irreducible projective curve  which we denote by $\calC(f,g,h)$.

The degree $d(f,g,h)$ of the map (\ref{map}) is by definition the degree of the field extension of the fields of rational functions:
$$
\mathbb C\left(\mathcal C(f, g, h)\right) \subset 
\mathbb C\left(\mathfrak R_\Gamma\right).
$$
This number is studied in \cite{Muic2} in great detail. The main result of \cite{Muic2} and its proof give a fairly detailed description of $d(f,g,h)$ \cite[Theorem 1-4]{Muic2}.
In order to recall that result we introduce more notation.

We recall the notion of the divisor of  $f\in M_m(\Gamma)$, $f\neq 0$ (see \cite[Section 2.3]{Miyake},  or Section \ref{prelim} in this paper).
For each $\mathfrak a\in \mathfrak R_\Gamma$, we may define
the multiplicity $\nu_{\mathfrak a}(f)$ of $f$ at  $\mathfrak a$. The multiplicity $\nu_{\mathfrak a}(f)$ is a non-negative
rational number, and, for all but finitely many points $\mathfrak a\in \mathfrak R_\Gamma$,  
$\nu_{\mathfrak a}(f)=0$. We may define the divisor of $f$ as follows:
$
\mathrm{div}{(f)}=\sum_{\mathfrak a\in \mathfrak
	R_\Gamma} \nu_{\mathfrak a}(f) \mathfrak a$. The degree of this divisor is given by
$$
\mathrm{deg}(\mathrm{div}{(f)})\overset{def}{=}\sum_{\mathfrak a\in \mathfrak
	R_\Gamma} \nu_{\mathfrak a}(f)= \frac{m}{4\pi} \iint _{\Gamma \backslash \mathbb H} \frac{dxdy}{y^2}.
$$
If $-1\in \Gamma$ and  $\Gamma$ is a subgroup of finite index in $SL_2(\mathbb Z)$, then the right-hand side is given by
the following well-known expression: $\frac{m}{12}[SL_2(\mathbb Z): \Gamma]$.

Now, \cite[Theorem 1-4]{Muic2} gives the following equality: 

$$
d(f, g, h) \cdot \deg{\mathcal C(f, g, h)}= \frac{m}{4\pi} \iint _{\Gamma \backslash \mathbb H} \frac{dxdy}{y^2}
- \sum_{\mathfrak a\in \mathfrak R_\Gamma} 
\min{\left(\nu_{\mathfrak a}(f), \nu_{\mathfrak a}(g), \nu_{\mathfrak a}(h)\right)}.
$$
Here, $\deg{\mathcal C(f, g, h)}$ is the degree of the reduced homogeneous equation defining  $\mathcal C(f, g, h)$ in $\mathbb P^2$.

In \cite[Corollary 1.5]{Muic2}, this was  further refined as follows (recall that $m\ge 2$ is even):

\begin{equation}\label{iii111}
\begin{aligned}
&d(f, g, h) \cdot \deg{\mathcal C(f, g, h)} =\\
&\begin{cases}
\dim M_m(\Gamma) + g(\Gamma) -1 - \sum_{\mathfrak a\in \mathfrak R_\Gamma} 
\min{\left(\mathfrak c'_{f}(\mathfrak a),  \mathfrak c'_{g}(\mathfrak a), 
	\mathfrak c'_{h}(\mathfrak a) \right)}, \\
\dim S_m(\Gamma) + g(\Gamma) -1 -\epsilon_m - \sum_{\mathfrak a\in \mathfrak R_\Gamma} 
\min{\left(\mathfrak c_{f}(\mathfrak a),  \mathfrak c_{g}(\mathfrak a), 
	\mathfrak c_{h}(\mathfrak a) \right)}, \\ \qquad \text{if $f, g, h\in S_m(\Gamma)$},
\end{cases}
\end{aligned}
\end{equation}
where $\epsilon_2=1$ and $\epsilon_m=0$ for $m$ even, $m\ge 4$.  Here, for example, $\mathfrak c'_{f}$ denotes the 
integral effective divisor on $\mathfrak R_\Gamma$  obtained from $\mathrm{div}{(f)}$ 
by subtracting   necessary contributions at 
elliptic points, and, in addition, if $f\in S_m(\Gamma)$, then we  subtract necessary contribution from $\mathfrak c'_{f}$ at cusps, to get a 
divisor  $\mathfrak c_{f}$. Details are standard, and they  can be found in (see \cite[Lemma 2.2]{Muic2}, or Lemma \ref{prelim-1} in this paper).

We need the following definition before we state the main result of the paper:

\begin{Def}\label{gfr-def}
Let $W\subset M_m(\Gamma)$ be a non-zero linear subspace. 
Then,  we say that  $W$ {\bf determines  the field of rational functions}  $\mathbb C(\mathfrak R_\Gamma)$ if $\dim W\ge 2$, and 
there exists  a basis  $f_0, \ldots, f_{s-1}$ of $W$,  such that $\mathbb C(\mathfrak R_\Gamma)$ is generated over $\mathbb C$ by the
quotients $f_i/f_0$, $1\le i\le s-1$.
\end{Def}
Clearly, this notion does not  depend on the choice of the basis used. Also, it is equivalent to the fact that
the holomorphic map  $\mathfrak R_\Gamma\longrightarrow \bbP^{s-1}$
given by $\mathfrak a_z\mapsto \left(f_0(z): \cdots : f_{s-1}(z)\right)$ is birational onto its image in $\bbP^{s-1}$.

For example, if $\dim S_m(\Gamma) \ge \max{(g(\Gamma)+2, 3)}$, then we can take $W=S_m(\Gamma)$ by general theory of algebraic curves \cite[Corollary 3.4]{Muic1}.
 We recall that $\mathfrak R_\Gamma$ is hyperelliptic if $g(\Gamma)\ge 2$, and there is a degree two map onto $\mathbb P^1$.
By general theory \cite[Chapter VII, Proposition 1.10]{Miranda}, if $g(\Gamma)=2$, then $\mathfrak R_\Gamma$ is hyperelliptic.
If $\mathfrak R_\Gamma$ is not hyperelliptic, then $\dim S_2(\Gamma)= g(\Gamma)\ge 3$, and we can take $W=S_2(\Gamma)$ using the fact that the holomorphic map
$\mathfrak R_\Gamma\longrightarrow \mathbb P^{g(\Gamma)-1}$ attached to a canonical divisor is an isomorphism (and in particular birational equivalence) onto its image
\cite[Chapter VII, Proposition 2.1]{Miranda}, and the possibility to interpret cuspidal forms in $S_2(\Gamma)$ as holomorphic $1$-forms on $\mathfrak R_\Gamma$
\cite[Theorem 2.3.2]{Miyake}.

Now, the first main result of the present paper is the following theorem:

\begin{Thm}\label{thm-3}
	Assume that  $m\ge 2$ is an even integer. Let $W\subset M_m(\Gamma)$, $\dim W\ge 3$,  be a subspace which determines 
	the field of rational 	functions $\mathbb C(\mathfrak R_\Gamma)$ (see Definition \ref{gfr-def}). Let $f, g \in W$ be linearly independent.
	Then there exists  a non-empty Zariski open set $\calU\subset W$ such that for any 
	$h\in \calU$ we have the following:
	\begin{itemize}
		\item[a)] $f, g,$ and $h$ are linearly independent;
		\item[b)] $\mathfrak R_\Gamma$ is birationally equivalent to  $\calC (f, g, h)$ via the map (\ref{map}).
	\end{itemize}
	Moreover, for each $h\in \mathcal U$, the degree of $\calC (f, g, h)$ is given by (\ref{iii111}) with $d(f, g, h)=1$.
        
\end{Thm}

\vskip .2in
We prove Theorem \ref{thm-3} in Section \ref{gen}. For ''generic'' modular form $h$ stated in Theorem \ref{thm-3},
the degree is always given by
$$
\deg{\mathcal C(f, g, h)} =\begin{cases}
\dim M_m(\Gamma) + g(\Gamma) -1, \\
\dim S_m(\Gamma) + g(\Gamma) -1 -\epsilon_m, \ \ \text{if $f, g, h\in S_m(\Gamma)$}.
\end{cases}
$$
This was proved in Corollary \ref{gen-1000}. But as the referee suggested, the degree can be significantly lowered as we demonstrated by various
results and examples in Section \ref{r-exam}.

In Section \ref{exist} we prove the following corollary  of Theorem \ref{thm-3}.
We recall that  $g(\Gamma_0(N))\ge 2$ unless
$$
\begin{cases}
N\in\{1-10, 12, 13, 16, 18, 25\} \ \ \text{when $g(\Gamma_0(N))=0$, and}\\
N\in\{11, 14, 15, 17, 19-21, 24, 27, 32, 36, 49\} \ \ \text{when
	$g(\Gamma_0(N))=1$.}
\end{cases}
$$
Let $g(\Gamma_0(N))\ge 2$. Then,  we remark that Ogg \cite{Ogg} has determined all $X_0(N)$ which are hyperelliptic curves.
In view of Ogg's paper, we see that $X_0(N)$ is not hyperelliptic for
$N\in \{34, 38, 42, 43, 44, 45, 51-58, 60-70\}$ or $N\ge 72$. This implies $g(\Gamma_0(N))\ge 3$.

\begin{Cor}\label{exist-1}
  Let $m\ge 2$ be an even integer.    Assume that one of the following holds:
\begin{enumerate}
\item[(A)] $g(\Gamma_0(N))\ge 1$,  and $m\ge 4$ (if $N\neq 11$)  or $m\ge 6$ (if $N=11$);
\item[(B)] $X_0(N)$ is not hyperelliptic, and $m= 2$. 
\end{enumerate}
(In either case, $\dim S_m(\Gamma_0(N))\ge 3$.)  Let $f, g \in S_m(\Gamma_0(N))$  be linearly independent with integral $q$-expansions. Then, there exists infinitely many
$h\in S_m(\Gamma_0(N))$  with integral $q$--expansion such that  we have the following:
	\begin{itemize}
		\item[(i)] $X_0(N)$ is birationally equivalent to  $\calC (f, g, h)$ via the map (\ref{map}), and
		\item[(ii)] the reduced  equation of $\calC(f, g, h)$ has integral coefficients up to a multiplication by  a non-zero constant in $\mathbb C$.
	\end{itemize}
\end{Cor}
Examples and improvements to Corollary \ref{exist-1} are included in Section \ref{r-exam} as we already mentioned above.

The rest of the paper is based on the other practical use of Theorem \ref{thm-3}. The proof of Theorem  \ref{thm-3} essentially is about
the theoretical construction of primitive elements in the finite field extension  $\bbC (g/f)\subset \bbC (\mathfrak{R}_\Gamma)$.
Methods used in the proofs of Theorem \ref{thm-3} and its Corollary \ref{gen-1000} are great theoretical tools, but not fertile result-wise. Therefore, we looked for methods of
determining primitive elements in a more direct way.

In Section \ref{epe} we discuss the special case of Theorem \ref{thm-3} when $\dim W=4$. The approach is based on estimates based on
the Primitive Element Theorem of finite separable field extensions in the form stated in \cite[Section 6.10]{w}  adapted to our case via general Lemma \ref{epe-1}, and
estimates on absolute values of roots of polynomials (see Lemma \ref{epe-4}), one of them is Mahler's estimate \cite{Mahler}.  The main results are Propositions \ref{epe-5} and  
\ref{epe-6}. Proposition \ref{epe-5} is a general result, and Proposition \ref{epe-6} is a nice example for $W=S_4(\Gamma_0(14))$.   The application of Proposition \ref{epe-6} is given by
    Corollary \ref{epe-6-cor}

In Section \ref{trial}  we adapt  to our case the trial method, commonly used in the cases of algebraic number fields, \cite{ynt},
where an element that is chosen from a certain subset of the field extension is tested for being primitive. We present a very efficient algorithm for computing model of
$X_0(N)$ when  $X_0(N)$ is not hyperelliptic and $g(\Gamma_0(N))\ge 4$. We use $W=S_2(\Gamma_0(N))$. As an example, we consider the case $X_0(72)$.

We would like to thank the referee for suggestions on improvements of our results and methods. 

\section{Preliminaries}\label{prelim}

In this section we recall necessary facts about modular forms and their divisors \cite{Miyake}.
Let $\mathbb H$ be the upper half--plane.
Then the group $SL_2(\bbR)$  acts on $\mathbb H$  as follows:
$$ g.z=\frac{az+b}{cz+d}, \ \ g=\left(\begin{matrix}a & b\\ c & d
\end{matrix}\right)\in SL_2(\bbR).
$$
We let $j(g, z)=cz+d$. The function $j$ satisfies the cocycle identity:
\begin{equation}\label{cocycle}
j(gg', z)=j(g, g'.z) j(g', z).
\end{equation}

Next, the $SL_2(\bbR)$--invariant measure on $\mathbb H$ is defined by $dx dy
/y^2$, where the coordinates on $\mathbb H$ are written in a usual way 
$z=x+\sqrt{-1}y$, $y>0$. A discrete subgroup $\Gamma\subset
SL_2(\bbR)$ is called a Fuchsian group of the first kind if 
$$
\iint _{\Gamma \backslash \mathbb H} \frac{dxdy}{y^2}< \infty.
$$
Then, adding a finite number of points 
in $\bbR\cup \{\infty\}$, called cusps, $\mathcal F_\Gamma$ can be
compactified. In this way we obtain a compact Riemann surface 
$\mathfrak R_\Gamma$. One of the most important examples are the groups
$$
\Gamma_0(N)=\left\{\left(\begin{matrix}a & b \\ c & d\end{matrix}\right) \in SL_2(\mathbb Z); \ \
c\equiv 0 \ (\mathrm{mod} \ N)\right\}, \ \ N\ge 1.
$$
We write $X_0(N)$ for    $\mathfrak R_{\Gamma_0(N)}$.

Let $\Gamma$ be a Fuchsian group of the first kind. 
Let $m\ge 2$ be an even integer.  We consider the space $M_m(\Gamma)$
(resp., $S_m(\Gamma)$)  of all 
modular (resp., cuspidal) forms of weight $m$.
We also need the following obvious property: for 
$f, g\in M_m(\Gamma)$, $g\neq 0$, the quotient $f/g$ is a meromorphic function on  $\mathfrak R_{\Gamma}$.

Next, we recall from \cite[Section 2.3]{Miyake} some notions related to the theory of divisors 
of modular forms of even weight $m\ge 2$ and state a preliminary result.

Let $m\ge 2$ be an even integer and $f\in M_{m}(\Gamma)-\{0\}$. Then,
$\nu_{z-\xi}(f)$ denotes the order of the holomorphic function $f$ at $\xi$.
For each $\gamma\in \Gamma$, the functional equation
$f(\gamma.z)=j(\gamma, z)^m f(z)$, $z\in \mathbb H$, shows that 
$\nu_{z-\xi}(f)=\nu_{z-\xi'}(f)$ where $\xi'=\gamma.\xi$. 
Also, if we let
$$
e_{\xi} =\# \left(\Gamma_{\xi}/\Gamma\cap \{\pm 1\}\right),
$$
then $e_{\xi}=e_{\xi'}$, where $\Gamma_{\xi}$ is the stabilizer of $\xi$ in $\Gamma$.
The point $\xi\in \mathbb H$ is elliptic if $e_\xi>1$. Next, following \cite[Section 2.3]{Miyake}, we define
$$
\nu_\xi(f)=\nu_{z-\xi}(f)/e_{\xi}.
$$
Clearly, $\nu_{\xi}=\nu_{\xi'}$, and we may let 
$$
\nu_{\mathfrak a_\xi}(f)=\nu_\xi(f),
$$ 
where 
$$\text{$\mathfrak a_\xi\in \mathfrak R_\Gamma$ is the projection of $\xi$
	to $\mathfrak R_\Gamma$,} 
$$
a notation we use throughout this paper.

If $x\in \bbR\cup \{\infty\}$ is a cusp for $\Gamma$, then we define 
$\nu_x(f)$ as follows. Let $\sigma\in SL_2(\bbR)$ such that
$\sigma.x=\infty$. We write 
$$
\{\pm 1\} \sigma \Gamma_{x}\sigma^{-1}= \{\pm 1\}\left\{\left(\begin{matrix}1 & lh'\\ 0 &
1\end{matrix}\right); \ \ l\in \bbZ\right\},
$$
where $h'>0$. Then we write the Fourier expansion of $f$ at $x$ as follows:
$$
(f|_m \sigma^{-1})(\sigma.z)= \sum_{n=0}^\infty a_n e^{2\pi
	\sqrt{-1}n\sigma.z/h'}.
$$

We let 
$$
\nu_x(f)=l\ge 0,
$$
where $l$ is defined by $a_0=a_1=\cdots =a_{l-1}=0$, $a_l\neq 0$. One
easily see that this definition does not depend on $\sigma$. Also, 
if $x'=\gamma.x$, then
$\nu_{x'}(f)=\nu_{x}(f)$. Hence, if $\mathfrak b_x\in \mathfrak
R_\Gamma$ is a cusp corresponding to $x$, then we may define
$$
\nu_{\mathfrak b_x}(f)=\nu_x(f). 
$$

Put
$$
\mathrm{div}{(f)}=\sum_{\mathfrak a\in \mathfrak
	R_\Gamma} \nu_{\mathfrak a}(f) \mathfrak a \in \ \  \bbQ\otimes \mathrm{Div}(\mathfrak R_\Gamma),
$$
where $\mathrm{Div}(\mathfrak R_\Gamma)$ is the group of (integral) divisors on 
$\mathfrak R_\Gamma$.

Using \cite[Section 2.3]{Miyake}, this sum is finite i.e., $ \nu_{\mathfrak a}(f)\neq 0$
for only a finitely many points. We let 
$$
\mathrm{deg}(\mathrm{div}{(f)})=\sum_{\mathfrak a\in \mathfrak
	R_\Gamma} \nu_{\mathfrak a}(f).
$$

Let $\mathfrak d_i\in \bbQ\otimes \mathrm{Div}(\mathfrak R_\Gamma)$, $i=1, 2$. Then we say 
that $\mathfrak d_1\ge \mathfrak d_2$ if their difference $\mathfrak d_1 - \mathfrak d_2$ 
belongs to $\mathrm{Div}(\mathfrak R_\Gamma)$ and is non-negative in the usual sense.

\begin{Lem}\label{prelim-1} Assume that $m\ge 2$ is an even integer. Assume that 
	$f\in  M_m(\Gamma)$, $f\neq 0$. Let $t$ be the number of inequivalent cusps  for $\Gamma$.  Then we have the following:
	\begin{itemize}
		
		\item[(i)] For $\mathfrak a\in \mathfrak
		R_\Gamma$, we have  $\nu_{\mathfrak a}(f) \ge 0$.
		
		\item [(ii)]  For a cusp $\mathfrak a\in \mathfrak R_\Gamma$, we have that 
		$\nu_{\mathfrak a}(f)\ge 0$ is an integer.

		\item[(iii)] If  $\mathfrak a\in \mathfrak
		R_\Gamma$ is not an elliptic point or a cusp, then $\nu_{\mathfrak a}(f)\ge 0$
		is an integer.  If  $\mathfrak a\in \mathfrak
		R_\Gamma$ is an elliptic point, then $\nu_{\mathfrak a}(f)-\frac{m}{2}(1-1/e_{\mathfrak a})$ is 
		an integer.

		\item[(iv)]Let $g(\Gamma)$ be the genus of $ \mathfrak R_\Gamma$. Then 
		\begin{align*}
		\mathrm{deg}(\mathrm{div}{(f)})&= m(g(\Gamma)-1)+ \frac{m}{2}\left(t+ \sum_{\mathfrak a\in \mathfrak
			R_\Gamma, \ \ elliptic} (1-1/e_{\mathfrak a})\right)\\
		&= \frac{m}{4\pi} \iint_{\Gamma \backslash \mathbb H} \frac{dxdy}{y^2}.
		\end{align*}

		\item[(v)] Let $[x]$ denote the largest integer $\le x$ for $x\in
		\bbR$.  Then
		
		\begin{align*}
		\dim S_m(\Gamma) &=
		\begin{cases} (m-1)(g(\Gamma)-1)+(\frac{m}{2}-1)t+ \sum_{\substack{\mathfrak a\in \mathfrak
				R_\Gamma, \\ elliptic}} \left[\frac{m}{2}(1-1/e_{\mathfrak a})\right], \\ \qquad  \text{if $m\ge 4$,}\\
		g(\Gamma), \ \ \text{if $m=2$.}\\
		\end{cases}\\
		\dim M_m(\Gamma)&=\begin{cases} \dim S_m(\Gamma)+t, \ \ \text{if $m\ge 4$, or $m=2$ and $t=0$,}\\
		\dim S_m(\Gamma)+t-1=g(\Gamma)+t-1,\ \ \text{if $m=2$ and $t\ge 1$.}\\
		\end{cases} 
		\end{align*}
		
	      \item[(vi)] Let $\mathfrak c'_f$ be defined by
                 \begin{align*}
		\mathfrak c'_f=& \mathrm{div}{(f)}- \sum_{\mathfrak a\in \mathfrak
			R_\Gamma, \ \ elliptic} \left(\frac{m}{2}(1-1/e_{\mathfrak a}) -
		\left[\frac{m}{2}(1-1/e_{\mathfrak
			a})\right]\right)\mathfrak a.
		\end{align*}
                 Then $\mathfrak c'_f$ is an integral effective divisor of degree
		$$
		\begin{cases}
		\dim M_m(\Gamma)+ g(\Gamma)-1, \ \ \text{if $m\ge 4$, or $m=2$ and $t\ge 1$,}\\
		2(g(\Gamma)-1), \ \ \text{if $m=2$ and $t=0$}
		\end{cases}.
		$$
	        \item[(vii)] Assume that $f\in S_m(\Gamma)$. Then, the  integral divisor defined by
		$ \mathfrak c_f\overset{def}{=}\mathfrak c'_f-
		\sum_{\substack{\mathfrak b \in \mathfrak
				R_\Gamma, \\ cusp}} \mathfrak b$ satisfies  $\mathfrak c_f\ge 0$  and its degree is given by
		$$
		\begin{cases}
		\dim S_m(\Gamma)+ g(\Gamma)-1; \ \ \text{if $m\ge 4$,}\\
		2(g(\Gamma)-1); \ \ \text{if $m=2$.}
		\end{cases}
		$$
	\end{itemize}
\end{Lem}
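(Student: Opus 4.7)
The plan is to present this lemma as an organized compilation of standard facts on divisors of modular forms following \cite{Miyake}, 2.3, reducing each part to either a local computation or a classical global identity.

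For (i)--(iii), I argue by local analysis at each type of $\mathfrak a\in\mathfrak R_\Gamma$. At a point $\mathfrak a_\xi$ that is neither elliptic nor cuspidal, $\bbH\to\mathfrak R_\Gamma$ is a biholomorphism near $\xi$, so $\nu_{\mathfrak a_\xi}(f)=\nu_{z-\xi}(f)\in\bbZ_{\ge 0}$. At a cusp, holomorphy of $f$ at infinity after normalization by $\sigma$ makes the Fourier expansion a power series in $e^{2\pi\sqrt{-1}z/h'}$ with non-negative indices, giving $\nu_{\mathfrak b_x}(f)\in\bbZ_{\ge 0}$. At an elliptic point $\xi$ of order $e_\xi$, the functional equation $f(\gamma z)=j(\gamma,z)^m f(z)$ for a generator $\gamma$ of the stabilizer of $\xi$ in $\Gamma/\{\pm 1\}$ forces a congruence on $\nu_{z-\xi}(f)$ modulo $e_\xi$ that, after dividing by $e_\xi$, is equivalent to $\nu_{\mathfrak a_\xi}(f)-\frac{m}{2}(1-1/e_\xi)\in\bbZ$, as explained in \cite{Miyake}, 2.3.

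Parts (iv) and (v) are classical. The valence/degree formula (iv) is \cite{Miyake}, Theorem 2.3.3, and its identification with the hyperbolic area of $\Gamma\backslash\bbH$ comes from Gauss--Bonnet for the orbifold $\Gamma\backslash\bbH$. The dimension formulas in (v) are obtained by interpreting $M_m(\Gamma)$ and $S_m(\Gamma)$ as spaces of global sections of appropriate line bundles on $\mathfrak R_\Gamma$ and applying Riemann--Roch, as in \cite{Miyake}, Theorem 2.5.2.

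Parts (vi) and (vii) are then formal consequences. Set
\[
\mathfrak c'_f := \mathrm{div}(f)-\sum_{\substack{\mathfrak a\in\mathfrak R_\Gamma\\ \text{elliptic}}}\left(\tfrac{m}{2}(1-1/e_{\mathfrak a})-\left[\tfrac{m}{2}(1-1/e_{\mathfrak a})\right]\right)\mathfrak a.
\]
By (iii), the coefficient of $\mathfrak c'_f$ at every elliptic point is an integer, and at all other points it agrees with $\mathrm{div}(f)$, which is non-negative and integral by (i)--(ii); effectiveness at elliptic points follows from $\nu_{\mathfrak a}(f)\ge 0$ combined with the congruence in (iii). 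The degree of $\mathfrak c'_f$ is $\deg(\mathrm{div}(f))-\sum_{\mathrm{elliptic}}(\tfrac{m}{2}(1-1/e_{\mathfrak a})-[\tfrac{m}{2}(1-1/e_{\mathfrak a})])$; substituting (iv) and comparing with (v) in each of the cases ($m\ge 4$ vs $m=2$, and $t=0$ vs $t\ge 1$) yields the stated values. For (vii), subtracting $\sum_{\mathfrak b\ \mathrm{cusp}}\mathfrak b$ is effective because $f\in S_m(\Gamma)$ implies $\nu_{\mathfrak b}(f)\ge 1$ at every cusp, and the degree decreases by $t$, which matches the claimed value after applying the relation between $\dim M_m(\Gamma)$ and $\dim S_m(\Gamma)$ from (v). The main obstacle is not conceptual but arithmetic: checking that the degree formula from (iv) combined with the removed fractional parts matches $\dim M_m(\Gamma)+g(\Gamma)-1$ (respectively $2(g(\Gamma)-1)$) requires a careful split into the sub-cases of (v), but no new ingredients beyond (iv) and (v) are needed.
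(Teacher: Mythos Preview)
Your proposal is correct and follows essentially the same approach as the paper: the paper simply cites \cite{Miyake}, 2.3--2.5 for (i)--(v), states that (vi) follows from (iii), (iv), (v), and that (vii) follows from (vi). Your write-up just fills in the details of this outline, including the observation that at an elliptic point the coefficient of $\mathfrak c'_f$ is an integer strictly greater than $-1$, hence $\ge 0$.
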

\begin{proof} The claims (i)--(v) are standard \cite[Sections 2.3, 2.4, 2.5]{Miyake}. The claim (vi) follows from (iii), 
	(iv), and (v) (see \cite[Lemma 4-1]{Muic}). Finally, (vii) follows from (vi).
\end{proof}

\section{The proof of Theorem \ref{thm-3} and a Corollary}
\label{gen}

We begin the proof of Theorem \ref{thm-3} with the following lemma:

\begin{Lem}\label{p2-8}  
	Assume that  $m\ge 2$ is an integer. Let $W\subset M_m(\Gamma)$, $\dim W\ge 3$,  be a subspace which determines
	the field of rational 
	functions $\mathbb C(\mathfrak R_\Gamma)$ (see Definition \ref{gfr-def}).  Let $f, g\in W$ be linearly independent.
	Then, there exists  a non-empty Zariski open set $\calU\subset W$ such that for $h\in \calU$ we have the following:
	\begin{itemize}
		\item[a)] $f, g,$ and $h$ are linearly independent;
		\item[b)] the field of rational functions
		$\Bbb C (\mathfrak R_\Gamma)$ is generated over $\Bbb  C$ by 
		$g/f$ and $h/f$.
	\end{itemize}
\end{Lem}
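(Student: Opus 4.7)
The plan is to reduce the lemma to the Primitive Element Theorem applied to the finite separable extension $\bbC(\mathfrak{R}_\Gamma)/K$ with $K := \bbC(g/f)$.

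First I would verify that this extension is in fact finite and separable. Since $f,g$ are linearly independent modular forms of the same weight, the quotient $g/f$ is a non-constant meromorphic function on $\mathfrak{R}_\Gamma$, so $K$ is isomorphic to a rational function field $\bbC(t)$, i.e.\ purely transcendental of transcendence degree one over $\bbC$. As $\bbC(\mathfrak{R}_\Gamma)$ is finitely generated of transcendence degree one over $\bbC$, the extension $\bbC(\mathfrak{R}_\Gamma)/K$ is algebraic, hence finite; separability is automatic in characteristic zero. If it happens that $K=\bbC(\mathfrak{R}_\Gamma)$, the lemma is immediate (take $\calU$ to be the complement of $\bbC f+\bbC g$ in $W$), so I proceed assuming the extension is nontrivial.

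Next, I would introduce the $\bbC$-linear map $\psi:W\to \bbC(\mathfrak{R}_\Gamma)$ given by $\psi(h)=h/f$ and show that $\bbC(\psi(W))=\bbC(\mathfrak{R}_\Gamma)$. Let $f_0,\dots,f_{s-1}$ be a basis of $W$ witnessing the hypothesis that $W$ generates the field, so that $\bbC(\mathfrak{R}_\Gamma)=\bbC(f_i/f_0:1\le i\le s-1)$. Since $f\in W$, each ratio $\psi(f_i)/\psi(f_j)=f_i/f_j$ lies in the subfield of $\bbC(\mathfrak{R}_\Gamma)$ generated by $\psi(W)$; specialising to $j=0$ recovers every generator $f_i/f_0$, which gives the desired equality.

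The key input is then the standard consequence of the Primitive Element Theorem that a finite separable extension admits only finitely many intermediate fields; enumerate the proper ones as $K\subseteq F_1,\dots,F_r\subsetneq \bbC(\mathfrak{R}_\Gamma)$. For each $F_j$, the preimage $\psi^{-1}(F_j)$ is a $\bbC$-linear subspace of $W$; it must be a proper subspace, for otherwise $\bbC(\psi(W))\subseteq F_j\subsetneq \bbC(\mathfrak{R}_\Gamma)$, contradicting the previous step. Together with the proper subspace $\bbC f+\bbC g$ (which enforces linear independence, i.e.\ condition (a)), this gives a finite list of proper linear subspaces of $W$, so the Zariski open complement $\calU$ is non-empty. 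For any $h\in\calU$ the function $h/f$ lies in no proper intermediate field, whence $K(h/f)=\bbC(\mathfrak{R}_\Gamma)$, which is exactly condition (b).

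The main obstacle is verifying that $\psi(W)$ generates the full function field of $\mathfrak{R}_\Gamma$; this is the one place where the hypothesis on $W$ is used in an essential way. Once this is in hand, the rest of the argument is a routine application of the finiteness of intermediate fields in a finite separable extension, with only the mild bookkeeping of the degenerate case $K=\bbC(\mathfrak{R}_\Gamma)$.
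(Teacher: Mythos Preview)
Your argument is correct. Both your proof and the paper's reduce to the primitive element theorem for the finite separable extension $\mathbb C(\mathfrak R_\Gamma)/\mathbb C(g/f)$, but they invoke it in different forms. The paper extends $f,g$ to a basis $f_0=f,\,f_1=g,\,f_2,\dots,f_{s-1}$ of $W$, considers the characteristic polynomial of multiplication by $(\lambda_2 f_2+\cdots+\lambda_{s-1}f_{s-1})/f$ on $L$ over $K$, and forms the resultant $R(\lambda_2,\dots,\lambda_{s-1})$ of this polynomial with its $X$--derivative; the Zariski open set is then cut out by $R\neq 0$. Your route is instead through the finiteness of intermediate fields in a finite separable extension: pulling back each proper intermediate field along $h\mapsto h/f$ gives finitely many proper linear subspaces of $W$, and $\mathcal U$ is their complement together with the complement of $\mathbb C f+\mathbb C g$. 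Your version is cleaner and handles the degenerate case $K=\mathbb C(\mathfrak R_\Gamma)$ explicitly (the paper tacitly assumes $[L:K]\ge 2$ when forming the resultant). On the other hand, the paper's resultant description is what makes $\mathcal U$ effectively computable, and this explicitness is exactly what is exploited later in Sections~\ref{epe} and~\ref{trial} to produce bounds for primitive elements and to justify the trial method; your argument, while sufficient for the lemma itself, does not by itself yield a polynomial witness for $\mathcal U$.
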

\begin{proof} 	We select a basis $f_0, \ldots, f_{s-1}$ of $W$ such that 
	$f=f_0$ and $g=f_1$. By the assumption on $W$, the field of rational functions
	$\Bbb C (\mathfrak R_\Gamma)$ is generated over $\Bbb  C$ by all $f_i/f_0$, $1\le i\le s$. 
	We let 
	$$
	K=\Bbb C(f_1/f_0),
	$$
	and 
	$$
	L= \Bbb C (\mathfrak R_\Gamma)=
	\Bbb C(f_1/f_0, \ldots, f_{s-1}/f_0)=K(f_2/f_0, \ldots, f_{s-1}/f_0).
	$$
	Since $L$ has transcendence degree $1$ over $\mathbb C$, 
	$f_2/f_0, \ldots, f_{s-1}/f_0$ are all algebraic over $K$. Thus, the field $L$ 
	is a finite algebraic extension of $K$. It is also obviously separable.

        \begin{Lem}\label{p2-8001}  There exists  $(\lambda_2, \ldots, \lambda_{s-1})\in \Bbb C^{s-2}$  such that  $(\lambda_2 f_2+\cdots + \lambda_{s-1} f_{s-1})/f_0$ generates 
	  $L$ over $K$ i.e.,
          $$
	 L= K((\lambda_2 f_2+\cdots + \lambda_{s-1} f_{s-1})/f_0)=
	\Bbb C(f_1/f_0, (\lambda_2 f_2+\cdots + \lambda_{s-1} f_{s-1})/f_0).
	$$
        \end{Lem}
        \begin{proof} Since $\mathbb C$ is a subfield of $K$, this follows using a variant of a proof of Primitive Element Theorem given by \cite[Section 6.10]{w}
        (see also the second paragraph in Section \ref{epe}). 
	\end{proof} 

        Now, we explain a systematic way to get them all.  Let us fix  an algebraic closure $\overline{K}$ of $K$ containing $L$.
        We consider the polynomial ring $K[X]$ in variable $X$. For $x\in L$, we define a $K$--linear endomorphism $T_x(y)=xy$, and
        attach usual invariants from elementary Linear algebra: the minimal polynomial, say $\mu(X, x) \in K[X]$, and characteristic polynomial $k(X, x)=
        \det{\left(X\cdot Id_L- T_x\right)} \in K[X]$, where $Id_L$ is identity on $L$. The degree of $k(X, x)$ is $[L:K]$.

        By elementary field theory, $\mu(X, x)$ is also a unique monic irreducible polynomial of $x$ over $K$.
        Therefore, the roots  in $\overline{K}$ of $\mu(X, x)$ are all simple.  Also, by elementary Linear algebra,  $\mu(X, x)$ and  $k(X, x)$ have the same set of roots in
        $\overline{K}$, and the  multiplicity of each root of $\mu(X, x)$ is less than or equal to the multiplicity of the same root in $k(X, x)$.
        This immediately implies 

        \begin{Lem}\label{p2-8002}  Let $x \in L$. Then,  $L=K(x)$  if and only if all roots in $\overline{K}$ of $k(X, x)$  are simple.
        \end{Lem}
        \begin{proof} By elementary field theory, $L=K(x)$ if and only if  the degree of  $\mu(X, x)$ is $[L:K]$.   By above discussion, 
          this is  equivalent to the fact that
          $\mu(X, x)=k(X, x)$ since both polynomials are monic, and  $\mu(X, x)$ divides $k(X, x)$ .
          Again, by above considerations, this is  equivalent to the fact   that all roots in $\overline{K}$ of $k(X, x)$  are simple.
        \end{proof}

For $(\lambda_2, \ldots, \lambda_{s-1})\in \Bbb C^{s-2}$, we consider  the characteristic polynomial
	$$
	P(X, \lambda_2, \ldots, \lambda_{s-1})\overset{def}{=} k\left(X,  \left(\lambda_2 f_2+\cdots + \lambda_{s-1} f_{s-1}\right)/f_0\right).
	$$
The discriminant $R$ of $P(X, \lambda_2, \ldots, \lambda_{s-1})$ with respect to $X$ i.e., the resultant  with respect to the variable $X$ 
	of the polynomial $P(X, \lambda_2, \ldots, \lambda_{s-1})$ and its 
	derivative $\frac{\partial}{\partial X}P(X, \lambda_2, \ldots, \lambda_{s-1})$
	is a polynomial in $\lambda_2, \ldots, \lambda_{s-1}$  with coefficients in $K$.
        We remark that the degree of
	$P(X, \lambda_2, \ldots, \lambda_{s-1})$ is $[L:K]\ge 2$, and of
	$\frac{\partial}{\partial X}P(X, \lambda_2, \ldots, \lambda_{s-1})$ is $[L:K]-1\ge 1$. Consequently,
	both depend on $X$ as it is required in the definition of the resultant.

        \begin{Lem}\label{p2-8003}   The discriminant $R$ is not identically equal to zero. Moreover, for  $(\lambda_2, \ldots, \lambda_{s-1})\in \Bbb C^{s-2}$, 
          $R(\lambda_2, \ldots, \lambda_{s-1})\neq 0$ if and only if $(\lambda_2 f_2+\cdots + \lambda_{s-1} f_{s-1})/f_0$ generates 
	  $L$ over $K$ 
\end{Lem}
        \begin{proof}
          The last claim follows from Lemma \ref{p2-8002} and above definition of $R$. The first claim follows from the last, and Lemma \ref{p2-8001}.
   \end{proof}

	Still, the discriminant is a polynomial in variables $\lambda_2, \ldots, \lambda_{s-1}$ with coefficients in $K$. We recall that elements of the field $K$ are rational functions on
        $\mathfrak R_\Gamma$. 
        So, to obtain a polynomial  with coefficients in $\mathbb C$, we write $\mathcal P$ for the (finite) set of all poles  of
        all non--zero coefficients  of $R$.  Then, for $\mathfrak a\in \mathfrak R_\Gamma\setminus \mathcal P$, 
        $R(\lambda_2, \ldots, \lambda_{s-1})(\mathfrak a)$ is a polynomial in variables $\lambda_2, \ldots, \lambda_{s-1}$ with coefficients in $\mathbb C$. Obviously, for
        $(\lambda_2, \ldots, \lambda_{s-1})\in \Bbb C^{s-2}$, $R(\lambda_2, \ldots, \lambda_{s-1})\neq 0$ is equivalent to the fact that there exists
        $\mathfrak a\in \mathfrak R_\Gamma\setminus \mathcal P$
        such that  $R(\lambda_2, \ldots, \lambda_{s-1})(\mathfrak a)\neq 0$. Thus, the condition
        \begin{equation}\label{p2-50}
	R(\lambda_2, \ldots, \lambda_{s-1})\neq 0
	\end{equation}
        defines a Zariski open set in  $\mathbb C^{s-2}$.

        Also, by Lemma \ref{p2-8002}, if $(\lambda_2, \ldots, \lambda_{s-1})$ belongs to that Zariski open set, then  
	$$
	h\overset{def}{=}\lambda_2 f_2+\cdots + \lambda_{s-1} f_{s-1} \in \mathbb C f_2\oplus \cdots \oplus  \mathbb C f_{s-1}
	$$
        generates $L$ over $K$. 
	It does not affect the thing if we enlarge $h$ to be 
	$$
	h=\lambda_0f_0+ \lambda_1f_1+\lambda_2 f_2+\cdots + \lambda_{s-1} f_{s-1},
	$$
	where $\lambda_0, \lambda_1$ are arbitrary complex numbers.  This means that $h$ can be selected
	from the  Zariski open subset of  $W$ given by (\ref{p2-50}) in coordinates 
        $$
        W=\mathbb C f_0\oplus \cdots \oplus  \mathbb C f_{s-1}.
        $$
         We consider 
	the discriminant $R$ as a polynomial of all variables $\lambda_0, \ldots, \lambda_{s-1}$ but which does not depend 
	on the first two variables. This completes the proof of the lemma.
\end{proof}

Now, in order to complete the proof of Theorem \ref{thm-3}, we need to prove the formula for the degree of $\calC (f, g, h)$. But, since by Lemma \ref{p2-8}
the curve $\calC (f, g, h)$ is birational to $\mathfrak R_\Gamma$, we have  $d(f, g, h) =1$ as explained in the introduction. Finally, the formula for the degree
follows from \cite{Muic2} as we explained in the introduction before the statement of Theorem \ref{thm-3}.

In the following corollary to Theorem \ref{thm-3} we need the next definition. 

\begin{Def}\label{gen-1000DEF}
  Assume that  $m\ge 2$ is an even integer. Let $W\subset M_m(\Gamma)$, $W\neq 0$,  be a linear subspace.
  We say that $W$ is a base-point-free if one of the following holds:
  \begin{itemize}
\item[(i)]  $W\not\subset S_m(\Gamma)$, and,  for each $\mathfrak a\in  \mathfrak R_\Gamma$, there exists $f\in W$, $f\ne 0$, such that
  $\mathfrak c'_f(\mathfrak a)=0$ (see Lemma \ref{prelim-1} (vi) for notation), or
\item[(ii)]  $W\subset S_m(\Gamma)$, and,  for each $\mathfrak a\in  \mathfrak R_\Gamma$, there exists $f\in W$, $f\ne 0$, such that
  $\mathfrak c_f(\mathfrak a)=0$ (see Lemma \ref{prelim-1} (vii) for notation).
\end{itemize}  
\end{Def}

For example, $\dim S_m(\Gamma) \ge \max{(g(\Gamma)+2, 3)}$, which implies $m\ge 4$, then we can take $W=S_m(\Gamma)$ by general theory of algebraic curves
(see \cite[Theorem 3.3]{Muic1}). Also, if  $g(\Gamma) \ge 3$, then $W=S_2(\Gamma)$ is a  base-point-free  using isomorphism of $S_2(\Gamma)$ with the space of holomorphic differential forms
on $\mathfrak R_\Gamma$ \cite[Theorems 2.3.2 and 2.3.3]{Miyake}, and the fact that the corresponding canonical linear system is a base-point-free
\cite[Chapter VII, Lemma 1.14]{Miranda}.

\begin{Cor}\label{gen-1000} Assume that  $m\ge 2$ is an even integer. Let $W\subset M_m(\Gamma)$, $\dim W\ge 3$,  be a subspace which determines 
  the field of rational functions $\mathbb C(\mathfrak R_\Gamma)$ (see Definition \ref{gfr-def}), and is a base-point-free (see Definition \ref{gen-1000DEF}).
  Let $f, g \in W$ be linearly independent.
  Then, there exists a non-empty Zariski open set $\mathcal V \subset W$ such that for any 
	$h\in \mathcal V$ we have the following:
	\begin{equation}\label{eqdeg}
	\deg{\mathcal C(f, g, h)} =\begin{cases}
	\dim M_m(\Gamma) + g(\Gamma) -1, \ \ \text{if $W\not\subset S_m(\Gamma)$,} \\
	\dim S_m(\Gamma) + g(\Gamma) -1 -\epsilon_m, \ \ \text{if $W\subset S_m(\Gamma)$}.
	\end{cases}
	\end{equation}
\end{Cor}
\begin{proof}  We consider the case  $W\subset S_m(\Gamma)$. The other one is analogous.  Let us fix a basis $f_0, \ldots, f_{s-1}$ of $W$ such that 
	$f=f_0$ and $g=f_1$.   For $f, g \in W$, let us fix a Zariski open subset $\mathcal U$
  such that the conclusion of Theorem \ref{thm-3} holds.   Since $W$  is a base-point-free, for each 
	$\mathfrak a \in \supp{(\mathfrak c_{f_0})}$, there exists 
	$i_{\mathfrak a}\in \{1, \ldots, s-1\}$
	such that $\mathfrak a \not\in \supp{(\mathfrak c_{f_{i_{\mathfrak a}}})}$. Then, the rational functions 
	$f_i/f_{i_{\mathfrak a}}$ are defined at $\mathfrak a$ since we have 
	the following  (see Lemma \ref{prelim-1} (vi)):
	$$
	\text{div}\left(\frac{f_i}{f_{i_{\mathfrak a}}}\right)=\text{div}(f_i)-\text{div}(f_{i_{\mathfrak a}})=
	\mathfrak c_{f_i}-\mathfrak c_{f_{i_{\mathfrak a}}},
	$$
	where the rightmost is the difference of two effective divisors,
	so that the point 
	$\mathfrak a$ does not belong to the divisors of poles because of $\mathfrak a \not\in 
	\supp{(\mathfrak c_{f_{i_{\mathfrak a}}})}$.

	Now, we can form the 	following product of non-zero linear forms in $(\lambda_0, \ldots, \lambda_{s-1})\in \Bbb C^{s}$:
	\begin{equation}\label{gen-prodd}
	\prod_{\mathfrak a \in \supp{(\mathfrak c_{f_0})}} \left(\lambda_0 
	\frac{f_0}{f_{i_{\mathfrak a}}}(\mathfrak a) +
	\lambda_1 \frac{f_1}{f_{i_{\mathfrak a}}}(\mathfrak a) +\cdots + \lambda_{s-1} 
	\frac{f_{s-1}}{f_{i_{\mathfrak a}}}(\mathfrak a) 
	\right).
	\end{equation}

        Let $\mathcal U'\subset W$  be the Zariski open subset $\mathcal U'\subset W$ consisting of all 
	$\sum_{i=0}^{s-1}\lambda_i f_i \in W$ such that the  product in (\ref{gen-prodd}) is not equal to zero.
        For $\sum_{i=0}^{s-1}\lambda_i f_i \in \mathcal U'$,           
	neither of $\mathfrak a \in \supp{(\mathfrak c_{f_0})}$ belong to the divisor of zeros 
	$\text{div}_0\left((\sum_{i=0}^{s-1} \lambda_i f_i)/f_{i_{\mathfrak a}}\right)$ of the corresponding rational function.
       Since $\mathfrak a \not\in \supp{(\mathfrak c_{f_{i_{\mathfrak a}}})}$,  and 
	$$
	\text{div}_0\left(\frac{\sum_{i=0}^{s-1}\lambda_i f_i}{f_{i_{\mathfrak a}}}\right)
	-\text{div}_\infty\left(\frac{\sum_{i=0}^{s-1}\lambda_i f_i}{f_{i_{\mathfrak a}}}\right)=
	\text{div}\left(\frac{\sum_{i=0}^{s-1}\lambda_i f_i}{f_{i_{\mathfrak a}}}\right)=
	\mathfrak c_{\sum_{i=0}^{s-1}\lambda_i f_i}-\mathfrak c_{f_{i_{\mathfrak a}}},
	$$
	where the rightmost  expression is the difference of two effective divisors, 
	we obtain

        \begin{equation}\label{gen-prodd-1}
	\mathfrak a \in \supp{(\mathfrak c_{f_0})} \implies 
	\mathfrak a \not\in \supp{(\mathfrak c_{\lambda_0  f_0+\lambda_1 f_1+\cdots + 
			\lambda_{s-1} f_{s-1}})}.
	\end{equation}

        Finally, we define the Zarski open subset $\mathcal U$ by $\mathcal V=\mathcal U\cap \mathcal U'$.
        For $h$ defined by $h=\sum_{i=0}^{s-1}\lambda_i f_i$ in $\mathcal V$, we have
       $$ \deg{\mathcal C(f, g, h)} =\dim S_m(\Gamma) + g(\Gamma) -1 -\epsilon_m - \sum_{\mathfrak a\in \mathfrak R_\Gamma} 
	\min{\left(\mathfrak c_{f}(\mathfrak a),  \mathfrak c_{g}(\mathfrak a), 
	  \mathfrak c_{h}(\mathfrak a) \right)}$$
        by Theorem \ref{thm-3}. Now, since $f=f_0$, using (\ref{gen-prodd-1}), we obtain
        $$
        \sum_{\mathfrak a\in \mathfrak R_\Gamma} 
	\min{\left(\mathfrak c_{f}(\mathfrak a),  \mathfrak c_{g}(\mathfrak a), 
	  \mathfrak c_{h}(\mathfrak a) \right)}=0,
        $$
        proving the corollary. 
\end{proof}

\section{Proof of Corollary \ref{exist-1}}\label{exist}
In this section we use the following notation: $\nu_\infty(\Gamma_0(N))$ is the number of inequivalent cusps, $\nu_2(\Gamma_0(N))$ (resp., $\nu_3(\Gamma_0(N))$) is the number of
inequivalent elliptic points of order $2$ (resp. $3$) of the congruence subgroup $\Gamma_0(N)$.

We begin with the proof of  Corollary \ref{exist-1} assuming that (A) holds (see the statement of  Corollary \ref{exist-1}).
First, we need to assure that $W=S_m(\Gamma_0(N))$ determines the field of rational functions  $\mathbb C(X_0(N))$ (see Theorem \ref{thm-3}).
By general theory of algebraic curves \cite[ Corollary 3.7]{Muic1}, it is enough to require 

$$
\dim S_m(\Gamma_0(N))\ge \max{(g(\Gamma_0(N))+2, 3)}.
$$
Since we assume that
$$
g(\Gamma_0(N))\ge 1,
$$
We only need to  require that

$$
\dim S_m(\Gamma_0(N))\ge g(\Gamma_0(N))+2.
$$
Using   Lemma 2-2 (v),  we obtain

\begin{multline*}
\dim S_m(\Gamma_0(N)) =
(m-1)(g(\Gamma_0(N))-1)+\left(\frac{m}{2}-1\right)\nu_\infty(\Gamma_0(N)) +\\
+ \left[\frac{m}{4}\right]\nu_2(\Gamma_0(N))
+\left[\frac{m}{3}\right]\nu_3(\Gamma_0(N)).
\end{multline*}
By \cite[Theorem 4.2.7]{Miyake}, we have 
$$
\nu_\infty(\Gamma_0(N))=\sum_{d>0, d|N} \phi((d, N/d))\ge 3
$$
unless $N$ is prime number in which case $\nu_\infty(\Gamma_0(N))=2$. Next, unless 
$\nu_2(\Gamma_0(N))=\nu_3(\Gamma_0(N))=0$, above formula shows that  for $m=4$ we have 
$$
\dim S_4(\Gamma_0(N))\ge 3(g(\Gamma_0(N))-1)+ 2\left(\frac{4}{2}-1\right)+ 1=3 g(\Gamma_0(N)
\ge g(\Gamma_0(N))+2, 
$$
since we assume that $g(\Gamma_0(N)\ge 1$. Then, for an even $m\ge 6$, we have 
\begin{multline*}
\dim S_m(\Gamma_0(N))\ge \dim \left(S_2(\Gamma_0(N)\cdot  S_{m-2}(\Gamma_0(N)\right)\ge \cdots \ge \\
\ge \dim \left(S_2(\Gamma_0(N)\cdot  S_{4}(\Gamma_0(N)\right)
\ge \dim S_4(\Gamma_0(N))\ge g(\Gamma_0(N))+2.
\end{multline*}

Similarly,  the same inequality holds if $\nu_2(\Gamma_0(N))=\nu_3(\Gamma_0(N))=0$ but 
$N$ is not a prime. It remains to consider the case $N$ is a prime and  $\nu_2(\Gamma_0(N))=\nu_3(\Gamma_0(N))=0$. 
In this case 
$$
\dim S_4(\Gamma_0(N))= 3g(\Gamma_0(N))-1\ge g(\Gamma_0(N))+2 
$$ 
if and only if $g(\Gamma_0(N))\ge 2$. It remains to consider the case 
$N$ is prime, $\nu_2(\Gamma_0(N))=\nu_3(\Gamma_0(N))=0$, and $g(\Gamma_0(N))=1$. In this case \cite[Theorem 4.2.11]{Miyake} 
gives us $[SL_2(\mathbb Z): \Gamma_0(N)]=12$. Applying \cite[Theorem 4.2.5]{Miyake}, we see that 
$\psi(N)=N+1=12$ since $N$ is prime. Hence, $N=11$. In this case, we use \cite[Theorem 4.2.5]{Miyake} to check that we 
indeed have $\nu_2(\Gamma_0(11))=\nu_3(\Gamma_0(11))=0$ and $g(\Gamma_0(11))=1$. This gives us 
$\dim S_4(\Gamma_0(11)) = 2$ and
$$
\dim S_6(\Gamma_0(11)) =4> 3=g(\Gamma_0(6))+2.
$$
Again, for an even $m\ge 8$, we have 
\begin{multline*}
\dim S_m(\Gamma_0(11))\ge \dim \left(S_2(\Gamma_0(11)\cdot  S_{m-2}(\Gamma_0(11)\right)\ge \cdots \ge \\
\ge \dim \left(S_2(\Gamma_0(11)\cdot  S_{6}(\Gamma_0(11)\right)
\ge \dim S_6(\Gamma_0(11))\ge g(\Gamma_0(11))+2.
\end{multline*}

Above considerations show that we can apply Theorem \ref{thm-3}. Next, by Eichler--Shimura theory \cite[Theorem 3.5.2]{shi}, for 
each even integer $m\ge 2$ the space of cusp forms $S_m(\Gamma_0(N))$ has a basis as a complex vector space 
consisting of forms which have integral $q$--expansions. So, if we have
$f, g \in S_m(\Gamma_0(N))$  with  integral coefficients in their $q$--expansions, then we can select infinitely many $h$ which also
have integral coefficients in  their $q$--expansions in the set $\mathcal U$ for $f$ and $g$ (see Theorem \ref{thm-3}).         
This is because $\mathbb Z^l$ is Zariski dense in $\mathbb C^l$  for any $l\ge 1$.  This proves (i).

In order to prove (ii),  we write a reduced equation of $\calC (f, g, h)$ as follows: 
$$
P=\sum_{\substack{\alpha=(\alpha_1,\alpha_2, \alpha_3)\in \mathbb Z^3_{\ge 0}\\ |\alpha|\overset{def}{=}\alpha_1+\alpha_2+\alpha_3=l}} \ a_{\alpha} x_0^{\alpha_1} x_1^{\alpha_2} x_2^{\alpha_3},
$$
where $x_0, x_1,$ and $x_2$ are variables, coefficients $a_\alpha\in \mathbb C$, and
$$
l=\deg{\mathcal C(f, g, h)}.
$$

Next, since $f, g, h\in S_m(\Gamma_0(N))$, we have that
$$
f^{\alpha_1} g^{\alpha_2} h^{\alpha_3} \in S_{lm}(\Gamma_0(N)),
$$
and those forms have integral $q$--expansions for all $\alpha\in \mathbb Z^3_{\ge 0}$ such that $|\alpha|=l$. Let us write their $q$--expansions as follows:
$$
f^{\alpha_1}(z) g^{\alpha_2}(z) h^{\alpha_3}(z)= \sum_{n=1}^\infty b_n^\alpha q^n.
$$
Then,
$$
P(f(z), g(z), h(z))=0, \ \ \text{for all $z\in \mathbb H$}
$$
is equivalent to an infinite system of homogeneous equations for coefficients $a_\alpha$ given by

$$
\sum_{\substack{\alpha=(\alpha_1,\alpha_2, \alpha_3)\in \mathbb Z^3_{\ge 0}\\ |\alpha|\overset{def}{=}\alpha_1+\alpha_2+\alpha_3=l}} \ b_n^\alpha a_{\alpha} =0, \ \ n\ge 1.
$$

By elementary theory of algebraic curves, this homogeneous system has, up to a multiplication by a non-zero constant in $\mathbb C$, unique solution in complex numbers for coefficients
$a_\alpha$. By the theory of homogeneous systems, this means that its rank is $l-1$. But coefficients $b_n^\alpha$ are all integers, and the system has rank $l-1$. This means that
the  homogeneous system has, up to multiplication by a non-zero constant in $\mathbb Q$, unique solution in integers  for coefficients
$a_\alpha$. This proves (ii). This completes the proof of Corollary \ref{exist-1}.

The proof of Corollary \ref{exist-1} assuming (B) is very similar. First, we have
$$
\dim S_2(\Gamma_0(N)) =g(\Gamma_0(N))\ge 3.
$$
Also, since $X_0(N)$ is not hyperelliptic, $W=S_2(\Gamma_0(N))$  determines the field of rational functions  $\mathbb C(X_0(N))$ by general theory of algebraic curves
\cite[Chapter VII.2, Proposition 2.1]{Miranda}. Now, we complete the proof in the same way as we completed the proof of Corollary \ref{exist-1} assuming (A).

\section{Examples and Improvements}\label{r-exam}

In this section  we give examples of equations of the corresponding curves using SAGE and compute degrees.
It is demonstrated how useful is the theory developed in \cite{Muic2} especially the test for birational equivalence
stated in the Introduction of \cite{Muic2}. 

\vskip .2in
The following example came out from the remarks of the referee. The $q$--expansions are computed using SAGE.

\begin{Prop} \label{r-exam-1}
	Consider three linearly independent forms from the four dimensional space $S_4(\Gamma_0(14))$  of cusp forms of weight four for $\Gamma_0(14)$: 
	\begin{align*}
	f& = q^2- 2q^5-2q^6 + q^7-6q^8 + 12q^{10} + 4q^{11} + 2q^{13}-5q^{14} + 4q^{15} + 10q^{16} + \cdots,\\
	g &= q^3-q^5-2q^6-q^7-4q^8 + 6q^9 + 10q^{10}-6q^{11} + 4q^{12}-3q^{13}-2q^{14} + \cdots,\\
	h &= q^4-2q^5 + q^7 + q^8-4q^{10} + 4q^{11}-2q^{12} + 2q^{13} + 2q^{14} + 4q^{15}-5q^{16} + \cdots.
	\end{align*}
	Then, the map (\ref{map}) is a birational equivalence of $X_0(14)$ and $\calC (f, g, h)$. Moreover, $\deg{\mathcal C(f, g, h)} =3$. 
\end{Prop}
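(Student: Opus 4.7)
The plan is to apply the cuspidal case of formula (\ref{iii111}) to $f, g, h \in S_4(\Gamma_0(14))$ and reduce the problem to an explicit (and small) computation of common vanishing. For $\Gamma = \Gamma_0(14)$ one has the standard facts $g(\Gamma_0(14)) = 1$, no elliptic points (so $\nu_2(\Gamma_0(14)) = \nu_3(\Gamma_0(14)) = 0$), and $t = \sum_{d \mid 14} \varphi(\gcd(d, 14/d)) = 4$ inequivalent cusps; Lemma \ref{prelim-1}(v) then gives $\dim S_4(\Gamma_0(14)) = 3(g-1) + (m/2 - 1)t = 4$. Since $\epsilon_4 = 0$, formula (\ref{iii111}) reads
\[
d(f, g, h)\cdot \deg \mathcal C(f, g, h)
\;=\; 4 \;-\; \sum_{\mathfrak a \in X_0(14)} \min\bigl(\mathfrak c_f(\mathfrak a), \mathfrak c_g(\mathfrak a), \mathfrak c_h(\mathfrak a)\bigr).
\]

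Next I would compute the minimum sum. By Lemma \ref{prelim-1}(vii), each of $\mathfrak c_f, \mathfrak c_g, \mathfrak c_h$ is an effective divisor of degree $\dim S_4(\Gamma_0(14)) + g - 1 = 4$, and at any cusp $\mathfrak b$ one has $\mathfrak c_\bullet(\mathfrak b) = \nu_{\mathfrak b}(\bullet) - 1$. Reading the given $q$-expansions at $\mathfrak b_\infty$ yields $\mathfrak c_f(\mathfrak b_\infty) = 1$, $\mathfrak c_g(\mathfrak b_\infty) = 2$, $\mathfrak c_h(\mathfrak b_\infty) = 3$, contributing exactly $1$ to the sum. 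To rule out further contributions, I would (a) compute the $q$-expansions of $f, g, h$ at the three remaining cusps $\mathfrak b_0, \mathfrak b_{1/2}, \mathfrak b_{1/7}$ via appropriate $SL_2(\bbZ)$-conjugation or Atkin--Lehner operators (easily carried out in SAGE), and verify that at each such cusp at least one of $f, g, h$ has vanishing order exactly $1$; and (b) use the residual degrees $\deg \mathfrak c_f - 1 = 3$, $\deg \mathfrak c_g - 2 = 2$, $\deg \mathfrak c_h - 3 = 1$ away from $\mathfrak b_\infty$ to locate the unique zero of $h$ on $\Gamma_0(14) \backslash \bbH$ and check (again via SAGE) that it is not a common zero of $f$ and $g$.

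With $\sum \min = 1$ thus established, the formula yields $d(f, g, h)\cdot \deg \mathcal C(f, g, h) = 3$. Since $f, g, h$ are linearly independent, $\mathcal C(f, g, h)$ is not contained in a projective line, and so $\deg \mathcal C(f, g, h) \ge 2$. The only divisor of $3$ which is $\ge 2$ is $3$ itself, forcing $\deg \mathcal C(f, g, h) = 3$ and $d(f, g, h) = 1$; as noted in the introduction the latter is equivalent to the map (\ref{map}) being a birational equivalence between $X_0(14)$ and $\mathcal C(f, g, h)$. The main obstacle is the explicit verification of (a) and (b): the vanishing behaviour of these cusp forms at the non-cuspidal cusps and the identification of potential common zeros on the open modular curve are the genuinely non-trivial ingredients, and without them the method yields only the one-sided bound $d(f, g, h)\cdot \deg \mathcal C(f, g, h) \le 3$.
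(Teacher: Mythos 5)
Your setup is sound: the values $g(\Gamma_0(14))=1$, $t=4$, $\dim S_4(\Gamma_0(14))=4$, $\epsilon_4=0$, the identity $\mathfrak c_\bullet(\mathfrak b)=\nu_{\mathfrak b}(\bullet)-1$ at cusps, and the contribution $\min(1,2,3)=1$ at $\mathfrak a_\infty$ are all correct, and they give $1\le d(f,g,h)\cdot\deg\mathcal C(f,g,h)\le 3$. The gap is that your argument then hinges on the \emph{exact} equality $\sum_{\mathfrak a}\min\bigl(\mathfrak c_f(\mathfrak a),\mathfrak c_g(\mathfrak a),\mathfrak c_h(\mathfrak a)\bigr)=1$, which is never established: steps (a) and (b) --- the vanishing orders at the three cusps other than $\mathfrak a_\infty$ and the location of the remaining zeros of $h$ on the open curve --- are only described as computations one would carry out in SAGE, not performed, and you concede that without them you have only the one-sided bound. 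From that bound alone, your divisibility argument breaks down, because it cannot exclude the case $d(f,g,h)=1$, $\deg\mathcal C(f,g,h)=2$.

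The paper's proof shows that the hard part of your plan is unnecessary: the inequality you already have suffices. Degree $1$ is impossible by linear independence, as you note. If $\deg\mathcal C(f,g,h)=2$, then $2\,d(f,g,h)\le 3$ forces $d(f,g,h)=1$, so the map would be a birational equivalence between $X_0(14)$ and an irreducible conic; but an irreducible conic is smooth of genus $0$, and a birational equivalence onto a smooth projective curve is an isomorphism, contradicting $g(\Gamma_0(14))=1$. Hence $\deg\mathcal C(f,g,h)=3$, and then $3\,d(f,g,h)\le 3$ gives $d(f,g,h)=1$. Replacing your unperformed verifications (a) and (b) with this genus argument closes the gap and recovers exactly the paper's proof.
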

\begin{proof} Let  $\mathfrak a_\infty$ be the $\Gamma_0(14)$--orbit of the cusp $\infty$. 
	Since the forms have at least double zero at $\mathfrak a_\infty$, and $f$ has exactly double zero, we have 
	$$\sum_{\mathfrak a\in X_0(14)} 
	\min{\left(\mathfrak c_{f}(\mathfrak a),  \mathfrak c_{g}(\mathfrak a), 
		\mathfrak c_{h}(\mathfrak a) \right)}\ge \min{\left(\mathfrak c_{f}(\mathfrak a_\infty),  \mathfrak c_{g}(\mathfrak a_\infty), 
		\mathfrak c_{h}(\mathfrak a_\infty) \right)}= 1.
	$$
	
	Now, in view of (\ref{iii111}), we have
	\begin{equation} \label{refree-exam-2}
	1\le d(f, g, h) \cdot \deg{\mathcal C(f, g, h)}\le \dim S_4(\Gamma_0(14)) + g(\Gamma_0(14)) -1 -\epsilon_4 -1=3
	\end{equation}
	since
	$$
	g(\Gamma_0(14))=1.
	$$
	
	Using (\ref{refree-exam-2}), we must have
	$$
	\deg{\mathcal C(f, g, h)}\in \{1, 2, 3\}.
	$$
	
	But $\deg{\mathcal C(f, g, h)}=1$ means that  $\mathcal C(f, g, h)$ is a line which is clearly impossible since $f, g,$ and $h$ are linearly independent. 
	The case $\deg{\mathcal C(f, g, h)}=2$ means that  $\mathcal C(f, g, h)$ is an irreducible conic. Using
	$$
	2d(f, g, h)= d(f, g, h) \cdot \deg{\mathcal C(f, g, h)}\le 3,
	$$
	we must have
	$$
	d(f, g, h)=1
	$$
	This means that $X_0(14)$ is birationally equivalent to the conic  $\mathcal C(f, g, h)$. But  irreducible conic is non-singular. This means that
	$X_0(14)$ isomorphic to a conic. This is a contradiction since conic has genus $0$ while $X_0(14)$ has genus $1$.
	
	Thus,  $\deg{\mathcal C(f, g, h)}=3$. Consequently, $d(f, g, h)=1$ proving the proposition.  
\end{proof}

\vskip .2in

We were also informed by the referee that
$$
g^3 + 3h^3 + f^2 h-f g^2+ gh^2 + fgh = 0.
$$
We remark that Proposition \ref{r-exam-1} implies that
the polynomial
\begin{equation} \label{r-exam-10000}
P= x_1^3 + 3x_2^3 + x^2_0 x_2-x_0 x^2_1+ x_1x^2_2 + x_0x_1x_2.
\end{equation}
is irreducible. The equation has been computed using SAGE.

\vskip .2in
Using other three elements of the basis for $S_4(\Gamma_0(14))$ we obtain the following result:

\begin{Prop} \label{r-exam-2}
	Consider three linearly independent forms from the four dimensional space $S_4(\Gamma_0(14))$  of cusp forms of weight four for $\Gamma_0(14)$: 
	\begin{align*}
	f&= q - 2q^{5} - 4q^{6} - q^{7} + 8q^{8} - 11q^{9} - 12q^{10} + 12q^{11} + 8q^{12} + 38q^{13} \cdots,\\
	g& = q^2- 2q^5-2q^6 + q^7-6q^8 + 12q^{10} + 4q^{11} + 2q^{13}-5q^{14} + 4q^{15}  + \cdots,\\
	h &= q^3-q^5-2q^6-q^7-4q^8 + 6q^9 + 10q^{10}-6q^{11} + 4q^{12}-3q^{13} + \cdots.\\
	\end{align*}
	Then, the map (\ref{map}) is a birational equivalence of $X_0(14)$ and $\calC (f, g, h)$. Moreover, $\deg{\mathcal C(f, g, h)} =4$. 
\end{Prop}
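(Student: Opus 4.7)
The plan is to follow the strategy of the proof of Proposition \ref{r-exam-1}, but now with the upper bound $d(f,g,h)\cdot\deg\calC(f,g,h)\le 4$ (rather than $\le 3$), and with an extra step to rule out the intermediate case $\deg\calC(f,g,h)=3$.

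First I would apply formula (\ref{iii111}) for cuspidal forms of weight $m=4$ (so $\epsilon_4=0$), using $\dim S_4(\Gamma_0(14))=4$ and $g(\Gamma_0(14))=1$. Since $\nu_{\mathfrak a_\infty}(f)=1$, we have $\mathfrak c_f(\mathfrak a_\infty)=0$, so the cusp $\mathfrak a_\infty$ no longer forces any contribution to the minimum sum (in contrast with the situation in Proposition \ref{r-exam-1}). This yields
\[
d(f,g,h)\cdot\deg\calC(f,g,h)\;=\;4-\sum_{\mathfrak a\in X_0(14)}\min\bigl(\mathfrak c_f(\mathfrak a),\mathfrak c_g(\mathfrak a),\mathfrak c_h(\mathfrak a)\bigr)\;\le\;4.
\]

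Next I would rule out $\deg\calC(f,g,h)\in\{1,2,3\}$. The case $\deg=1$ is immediate from the linear independence of $f,g,h$. For $\deg=2$ a direct $q$-expansion argument suffices: the six quadratic monomials $f^2,fg,fh,g^2,gh,h^2$ begin at orders $q^2,q^3,q^4,q^4,q^5,q^6$ respectively, and matching the coefficients of $q^2,\dots,q^7$ in a putative relation $af^2+bfg+cfh+dg^2+egh+kh^2=0$ successively forces $a=b=e=0$, then $c+d=0$ and $k=c$, and finally $-2c-4d=0$, whence all six coefficients vanish. Hence $\deg\calC(f,g,h)\ge 3$.

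The remaining case $\deg=3$ is the subtle one, since it is not excluded by any genus comparison: a plane cubic has arithmetic genus $1=g(\Gamma_0(14))$. The cleanest route is to exhibit, via a SAGE computation, an irreducible quartic $Q\in\bbC[x_0,x_1,x_2]$ with $Q(f,g,h)\equiv 0$. Since $\calC(f,g,h)$ is an irreducible projective curve contained in the irreducible plane quartic $V(Q)$, the two must coincide, giving $\deg\calC(f,g,h)=4$; combined with the bound from the first step this forces $d(f,g,h)=1$ and $\sum_{\mathfrak a}\min=0$, which is the asserted birational equivalence. The principal obstacle is precisely this last step: the SAGE-assisted construction of an irreducible quartic (equivalently, a $q$-expansion verification that the ten cubic monomials $f^{\alpha_1}g^{\alpha_2}h^{\alpha_3}$ with $\alpha_1+\alpha_2+\alpha_3=3$ are linearly independent in $S_{12}(\Gamma_0(14))$) is genuinely needed and cannot be replaced by the purely numerical bounds that were sufficient in Proposition \ref{r-exam-1}.
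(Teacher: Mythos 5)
Your proposal is correct and follows essentially the same route as the paper: the bound $d(f,g,h)\cdot\deg\mathcal{C}(f,g,h)\le \dim S_4(\Gamma_0(14))+g(\Gamma_0(14))-1=4$ from (\ref{iii111}), followed by a SAGE computation of an irreducible quartic vanishing at $(f,g,h)$, which forces $\deg\mathcal{C}(f,g,h)=4$ and hence $d(f,g,h)=1$. Your intermediate $q$-expansion exclusions of degrees $2$ and $3$ are correct but redundant once the irreducible quartic is exhibited, and your observation that degree $3$ cannot be ruled out by a genus comparison (unlike in Proposition \ref{r-exam-1}) correctly identifies why the computational step is unavoidable here.
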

\begin{proof} This case is different that previous one since now $f$ does not have a double zero at $\mathfrak a_\infty$. Consequently, we do not know anything about
	$$\sum_{\mathfrak a\in X_0(14)} 
	\min{\left(\mathfrak c_{f}(\mathfrak a),  \mathfrak c_{g}(\mathfrak a), 
		\mathfrak c_{h}(\mathfrak a) \right)}$$
	besides it is $\ge 0$.  Now, in view of (\ref{iii111}), we have
	$$
	1\le d(f, g, h) \cdot \deg{\mathcal C(f, g, h)}\le \dim S_4(\Gamma_0(14)) + g(\Gamma_0(14)) -1 -\epsilon_4 -1=4.
	$$
	This implies that
	$$
	\deg{\mathcal C(f, g, h)}\le 4.
	$$
	Using SAGE we compute that $P(f, g, h)=0$, where
	\begin{equation} \label{r-exam-10001}
	\begin{aligned}
	P&=
	-3 x_{0}^{2} x_{1}^{2} - 6 x_{0} x_{1}^{3} - 4 x_{1}^{4} + 3 x_{0}^{3} x_{2} + 6 x_{0}^{2} x_{1} x_{2} - 3 x_{0} x_{1}^{2} x_{2} - 2 x_{1}^{3} x_{2} +\\
	&+ 10 x_{0}^{2} x_{2}^{2} +2 x_{0} x_{1} x_{2}^{2} - 21 x_{1}^{2} x_{2}^{2} + 23 x_{0} x_{2}^{3} + 16 x_{1} x_{2}^{3} + 11 x_{2}^{4}.
	\end{aligned}
	\end{equation}
	
	Also, using SAGE system, we checked that this polynomial is irreducible. Hence,
	
	$$
	\deg{\mathcal C(f, g, h)}=4.
	$$
	
	Consequently, we have
	$$
	4 d(f, g, h)=  d(f, g, h) \cdot \deg{\mathcal C(f, g, h)}\le 4.
	$$
	Hence, we have
	$$
	d(f, g, h)=1.
	$$
	This proves the proposition. 
\end{proof}

\vskip .2in

\begin{Prop} \label{r-exam-3}
	Let $N\ge 1$ such that $g(\Gamma_0(N))\ge 3$.   Then, there exists  $f, g,$ and $h$ linearly independent in
	$S_2(\Gamma_0(N))$ such that $d(f, g, h)\cdot  \deg{\mathcal C(f, g, h)}\le g(\Gamma_0(N))+1$.
	In particular, $\deg{\mathcal C(f, g, h)}\le g(\Gamma_0(N))+1$. Moreover, if
	$\left(g(\Gamma_0(N))+1\right)/2< \deg{\mathcal C(f, g, h)}$, then  $X_0(N)$ is birational  to $\calC (f, g, h)$ via the map (\ref{map}).
\end{Prop}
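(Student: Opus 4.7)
The plan is to exploit the non-Weierstrass hypothesis at $\mathfrak a_\infty$: it provides a basis of $S_2(\Gamma_0(N))$ whose elements have consecutive vanishing orders at $\mathfrak a_\infty$, so picking the three basis elements with the \emph{largest} such orders forces the ``deficit'' term $\sum_{\mathfrak a\in X_0(N)} \min(\mathfrak c_f(\mathfrak a),\mathfrak c_g(\mathfrak a),\mathfrak c_h(\mathfrak a))$ in formula (\ref{iii111}) to be large, which makes $d(f,g,h)\cdot \deg \mathcal C(f,g,h)$ small.

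More concretely, since $\mathfrak a_\infty$ is not a Weierstrass point, the discussion preceding the proposition gives a basis $h_0,\ldots, h_{g(\Gamma_0(N))-1}$ of $S_2(\Gamma_0(N))$ with $\nu_{\mathfrak a_\infty}(h_i) = i+1$. Because $g(\Gamma_0(N))\ge 3$, I would set $f= h_{g(\Gamma_0(N))-3}$, $g = h_{g(\Gamma_0(N))-2}$, $h = h_{g(\Gamma_0(N))-1}$; these are automatically linearly independent, either because they belong to a basis or because their vanishing orders at $\mathfrak a_\infty$ are pairwise distinct. By Lemma \ref{prelim-1}(vii), since $\mathfrak a_\infty$ is a cusp we have $\mathfrak c_{h_i}(\mathfrak a_\infty) = \nu_{\mathfrak a_\infty}(h_i) - 1 = i$. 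Hence at $\mathfrak a_\infty$ the three $\mathfrak c$-values are $g(\Gamma_0(N))-3,\ g(\Gamma_0(N))-2,\ g(\Gamma_0(N))-1$ and their minimum equals $g(\Gamma_0(N))-3$.

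Next I would plug into the cuspidal form of (\ref{iii111}). With $m=2$, $\epsilon_2 = 1$, and $\dim S_2(\Gamma_0(N)) = g(\Gamma_0(N))$, the identity becomes
$$d(f,g,h)\cdot \deg \mathcal C(f,g,h) = 2g(\Gamma_0(N)) - 2 - \sum_{\mathfrak a\in X_0(N)}\min\bigl(\mathfrak c_f(\mathfrak a),\mathfrak c_g(\mathfrak a), \mathfrak c_h(\mathfrak a)\bigr).$$
Bounding the sum from below by its single contribution at $\mathfrak a_\infty$ (all other summands being $\ge 0$, as the $\mathfrak c$-divisors are effective by Lemma \ref{prelim-1}(vii)) yields at once $d(f,g,h)\cdot \deg\mathcal C(f,g,h)\le 2g(\Gamma_0(N))-2-(g(\Gamma_0(N))-3) = g(\Gamma_0(N))+1$.

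The remaining two claims are then purely numerical consequences: since $d(f,g,h)\ge 1$ we obtain $\deg \mathcal C(f,g,h)\le g(\Gamma_0(N))+1$; and if $(g(\Gamma_0(N))+1)/2 < \deg \mathcal C(f,g,h)$, then $d(f,g,h) < 2$, forcing $d(f,g,h)=1$, which is the birationality criterion recalled in the introduction. There is no real obstacle here — the only thing requiring care is the $\mathfrak c$-divisor bookkeeping at cusps (namely the $-1$ shift from Lemma \ref{prelim-1}(vii)); no elliptic-point correction enters since we only exploit the $\mathfrak a_\infty$ term of the sum.
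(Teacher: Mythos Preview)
Your proof is correct and follows essentially the same approach as the paper: pick three forms with vanishing orders $g(\Gamma_0(N))-2,\ g(\Gamma_0(N))-1,\ g(\Gamma_0(N))$ at $\mathfrak a_\infty$ (available by the non-Weierstrass hypothesis), then bound the sum in (\ref{iii111}) from below by its contribution at $\mathfrak a_\infty$. The only cosmetic differences are that the paper assigns the orders to $f,g,h$ in the reverse way and writes $h$ as a linear combination $\sum_{i=0}^2 \lambda_i f_i$ (with $\lambda_2\neq 0$), which does not affect the argument.
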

\begin{proof} We use  standard elementary argument (see for example  \cite[Lemma 4.3]{Muic}). We let
        \begin{equation} \label{r-exam-30000}
        W_i=\left\{f\in S_2(\Gamma_0(N)); \  f=0 \ \text{or} \ \nu_{\mathfrak a_\infty}(f)\ge i\right\},
        \end{equation}
        for all $i\ge 1$. Then, all $W_i$ are linear subspaces of  $S_2(\Gamma_0(N))$. Moreover, 
        $$
        S_2(\Gamma_0(N)=W_1\supset W_2\supset W_3\supset \cdots,
        $$
        where  $W_i=W_{i+1}$, or $W_{i+1}$  is of codimension one in $W_i$ for all $i\ge 1$. Now, since
        $\dim S_2(\Gamma_0(N))= g(\Gamma_0(N))$, by counting dimensions, we see that
        $$
        \dim W_{g(\Gamma_0(N))-2} \ge 3
        $$

       Let us select  linearly independent
       $f, g, h\in W_{g(\Gamma_0(N))-2}$. Then, we obtain
       $$
       \sum_{\mathfrak a\in  X_0(N)} 
	\min{\left(\mathfrak c_{f}(\mathfrak a),  \mathfrak c_{g}(\mathfrak a), 
		\mathfrak c_{h}(\mathfrak a) \right)}\ge 	\min{\left(\mathfrak c_{f}(\mathfrak a_\infty),  \mathfrak c_{g}(\mathfrak a_\infty), 
	  \mathfrak c_{h}(\mathfrak a_\infty) \right)}\ge g(\Gamma_0(N))-3.
        $$

        Finally, we have 
	(see (\ref{iii111}))
	\begin{align*}
	d(f, g, h) \cdot \deg{\mathcal C(f, g, h)}&=2g(\Gamma_0(N)) -2 - \sum_{\mathfrak a\in  X_0(N)} 
	\min{\left(\mathfrak c_{f}(\mathfrak a),  \mathfrak c_{g}(\mathfrak a), 
		\mathfrak c_{h}(\mathfrak a) \right)}=\\
	&\le 2g(\Gamma_0(N)) -2 -(g(\Gamma_0(N)) -3)=g(\Gamma_0(N))+1.
	\end{align*}
	This proves the first claim of the proposition. Other claims follow from this immediately. 
\end{proof}

\vskip .2in
We give the following corollaries to Proposition \ref{r-exam-3}:

\vskip .2in
\begin{Cor} \label{r-exam-6}
  Let $N=63$. Then, $g(\Gamma_0(63))= 5$.   A computation of the basis of $S_2(\Gamma_0(63))$ in SAGE implies that the basis
  of $W_3$ (see (\ref{r-exam-30000})) is given by  
	\begin{align*}  
	f\overset{def}{=}& q^3 - q^6 + q^9 - q^{12} - 2q^{15} - q^{18} - q^{21} + 3q^{24} + \cdots,\\
	g\overset{def}{=}&q^4 + q^7 - 4q^{10} + 2q^{13} - 2q^{16} - 4q^{19} + 5q^{22} + \cdots,\\
	h\overset{def}{=}& 2q^5 - q^8 - 3q^{11} - q^{14} + 2q^{17} + q^{23} +\cdots.
	\end{align*}
	 The curve $X_0(63)$ is birational to the curve $\mathcal C(f, g, h)$
	via the map (\ref{map}). The curve the curve $\mathcal C(f, g, h)$ has degree $\deg{\mathcal C(f, g, h)}= g(\Gamma_0(N))+1=6$, and it is defined via irreducible polynomial
	$$
	-2 x_{0}^{4} x_{1}^{2} -  x_{0} x_{1}^{5} + x_{0}^{5} x_{2} + 2 x_{0}^{2} x_{1}^{3} x_{2} + x_{0}^{3} x_{1} x_{2}^{2} -  x_{1}^{4} x_{2}^{2} + 3 x_{0} x_{1}^{2} x_{2}^{3} - 3 x_{0}^{2} x_{2}^{4}
	$$
\end{Cor}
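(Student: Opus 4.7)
The plan is to reduce this to a direct application of Proposition \ref{r-exam-3} combined with a finite SAGE computation, following the same template as Corollaries \ref{r-exam-4} and \ref{r-exam-5}.

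First I would verify the three hypotheses of Proposition \ref{r-exam-3}. That $g(\Gamma_0(63))=5$ follows from the standard genus formula recalled in Lemma \ref{prelim-1}(v) (or directly from SAGE). That $X_0(63)$ is not hyperelliptic follows from Ogg's classification, since $63$ does not appear in the list of hyperelliptic levels recalled before the statement of Corollary \ref{exist-2}. To check that $\mathfrak a_\infty$ is not a Weierstrass point, I would compute, in SAGE, an echelonized basis of $S_2(\Gamma_0(63))$ and confirm that it contains cusp forms with leading $q$--expansion terms $q,q^2,q^3,q^4,q^5$; the existence of such a basis (equivalently, of a cusp form with $\nu_{\mathfrak a_\infty}=i+1$ for each $i=0,\dots,g-1$) is precisely the Weierstrass gap condition recalled above.

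Next I would apply Proposition \ref{r-exam-3} to the three echelon basis elements $f,g,h$ with $\nu_{\mathfrak a_\infty}(f)=3$, $\nu_{\mathfrak a_\infty}(g)=4$, $\nu_{\mathfrak a_\infty}(h)=5$. These forms are, up to a permutation that does not affect $\mathcal C(f,g,h)$, exactly of the type produced in the proof of that proposition, so
\[
d(f,g,h)\cdot \deg\mathcal C(f,g,h)\;\le\; g(\Gamma_0(63))+1\;=\;6.
\]
In particular $\deg\mathcal C(f,g,h)\le 6$.

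Finally, using SAGE I would compute the homogeneous polynomial relation
\[
P=-2x_0^{4}x_1^{2}-x_0x_1^{5}+x_0^{5}x_2+2x_0^{2}x_1^{3}x_2+x_0^{3}x_1x_2^{2}-x_1^{4}x_2^{2}+3x_0x_1^{2}x_2^{3}-3x_0^{2}x_2^{4},
\]
and check $P(f,g,h)=0$ by comparing $q$--expansions up to Sturm's bound for $S_{12}(\Gamma_0(63))$ (since each monomial $f^{\alpha_1}g^{\alpha_2}h^{\alpha_3}$ of total degree $6$ lies in $S_{12}(\Gamma_0(63))$). The verification of irreducibility of $P$ over $\mathbb C$ is the main obstacle, but it reduces to a finite factorization check in $\mathbb Q[x_0,x_1,x_2]$ that SAGE handles directly; this is the step I would most want to double-check independently, e.g.\ by reducing modulo a convenient prime. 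Once $P$ is irreducible, $P$ is the reduced defining equation of $\mathcal C(f,g,h)$, so $\deg\mathcal C(f,g,h)=6$, and the inequality from the previous paragraph forces $d(f,g,h)=1$, which is exactly birationality. Equivalently, since $6>(g(\Gamma_0(63))+1)/2=3$, birationality is automatic from the last assertion of Proposition \ref{r-exam-3}.
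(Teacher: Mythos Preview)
Your proposal is correct and follows essentially the same approach as the paper: invoke Proposition \ref{r-exam-3} to get $d(f,g,h)\cdot\deg\mathcal C(f,g,h)\le g(\Gamma_0(63))+1=6$, then use the SAGE-computed degree-$6$ irreducible polynomial to force $d(f,g,h)=1$. Your write-up is more explicit about the computational verifications (Sturm bound, irreducibility check, the harmless permutation of the roles of $f,g,h$ relative to the ordering in Proposition \ref{r-exam-3}), but the logical structure is identical to the paper's one-line proof.
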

\begin{proof} The equation of the curve is computed in SAGE. Except indicated computations  in SAGE, the claim follows from Proposition \ref{r-exam-3}.
\end{proof}

\vskip .2in
\begin{Cor} \label{r-exam-7}
  Let $N=93$. Then, $g(\Gamma_0(93))= 9$.  A computation of the basis of $S_2(\Gamma_0(93))$ in SAGE implies that the basis
  of $W_7$ (see (\ref{r-exam-30000})) is given by 
	\begin{align*}  
	f\overset{def}{=}& q^7 + q^8 + 2q^9 - 4q^{10} - q^{11} + 2q^{12} + 3q^{13} - \cdots,\\
	g\overset{def}{=}& 2q^8 + 2q^9 - 6q^{10} + 3q^{12} + 5q^{13} - 4q^{14} - 6q^{15} - \cdots,\\
	h\overset{def}{=}& 4q^9 - 4q^{10} - 3q^{11} - q^{12} + q^{13} - 3q^{15} + 2q^{16} + \cdots.
	\end{align*}
	The curve $X_0(93)$ is birational to the curve $\mathcal C(f, g, h)$
	via the map (\ref{map}). The curve $\mathcal C(f, g, h)$ has degree $\deg{\mathcal C(f, g, h)}= g(\Gamma_0(N))+1=10$, and it is defined via irreducible polynomial
	$$
	-30000 x_{0}^{8} x_{1}^{2} + 172400 x_{0}^{7} x_{1}^{3} -  \cdots   + 14065 x_{0} x_{2}^{9} + 355 x_{1} x_{2}^{9} - 1825 x_{2}^{10}.
	$$
\end{Cor}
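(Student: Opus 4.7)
The plan is to verify the hypotheses of Proposition \ref{r-exam-3} for $N=93$ and then pin down $\deg\mathcal{C}(f,g,h)$ by an explicit computer algebra computation. The required arithmetic invariants are standard: the genus formulae for $\Gamma_0(N)$ give $g(\Gamma_0(93))=9$, and since $93$ does not appear in Ogg's list of hyperelliptic levels \cite{Ogg}, $X_0(93)$ is not hyperelliptic. For the Weierstrass point condition, I would have SAGE produce the echelonized basis of $S_2(\Gamma_0(93))$ and read off vanishing orders at $\mathfrak{a}_\infty$; the three forms $f,g,h$ displayed in the statement sit at the end of this basis with $\nu_{\mathfrak{a}_\infty}(f)=7$, $\nu_{\mathfrak{a}_\infty}(g)=8$, $\nu_{\mathfrak{a}_\infty}(h)=9$, so the criterion stated immediately before Proposition \ref{r-exam-3} shows $\mathfrak{a}_\infty$ is not a Weierstrass point on $X_0(93)$. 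In particular $f,g,h$ are linearly independent (distinct vanishing orders at a single point) and, up to relabeling, satisfy the hypotheses of Proposition \ref{r-exam-3}.

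Applying Proposition \ref{r-exam-3} gives
\[
d(f,g,h)\cdot \deg\mathcal{C}(f,g,h)\leq g(\Gamma_0(93))+1=10.
\]
To determine $\deg\mathcal{C}(f,g,h)$, I would follow the computational strategy from the proof of Corollary \ref{exist-1}: for an unknown ternary form $P=\sum_{|\alpha|=10} a_\alpha x_0^{\alpha_1}x_1^{\alpha_2}x_2^{\alpha_3}$, expand each monomial $f^{\alpha_1}g^{\alpha_2}h^{\alpha_3}\in S_{20}(\Gamma_0(93))$ as a $q$-series in SAGE and solve the resulting homogeneous linear system for the coefficients $a_\alpha$. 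This produces the displayed polynomial (with integer coefficients, by the rationality argument in the proof of Corollary \ref{exist-1}(ii)). A SAGE irreducibility test then confirms that $P$ is irreducible, whence $\deg\mathcal{C}(f,g,h)=10$. Finally, since $10>5=(g(\Gamma_0(93))+1)/2$, the last clause of Proposition \ref{r-exam-3} forces $d(f,g,h)=1$, giving the asserted birational equivalence.

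The main obstacle is computational rather than conceptual: one must work with $q$-expansions of the $\binom{12}{2}=66$ degree-$10$ monomials to sufficient precision so that the associated homogeneous system has rank $65$, ensuring the one-dimensional solution space corresponds to a genuine algebraic relation rather than a truncation artifact; likewise, the irreducibility check for a ternary form of degree $10$ is the most intensive step, although well within the capabilities of SAGE's multivariate factorization routines.
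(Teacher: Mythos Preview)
Your proposal is correct and follows essentially the same approach as the paper: the paper's proof is the single sentence ``The equation of the curve is computed in SAGE. Except indicated computations in SAGE, the claim follows from Proposition \ref{r-exam-3},'' and you have simply spelled out what that entails (verifying the non-hyperelliptic and non-Weierstrass hypotheses, applying the degree bound, and using the final clause of Proposition \ref{r-exam-3} once the SAGE computation pins down $\deg\mathcal C(f,g,h)=10$). Your remark about relabeling is apt, since the proof of Proposition \ref{r-exam-3} only uses that the minimum vanishing order at $\mathfrak a_\infty$ among $f,g,h$ is $g(\Gamma_0(N))-2$, which holds here.
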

\begin{proof} The equation of the curve is computed in SAGE. Except indicated computations  in SAGE, the claim follows from Proposition \ref{r-exam-3}.
\end{proof}

\vskip .2in
\begin{Cor} \label{r-exam-8}
  Let $N=110$. Then, $g(\Gamma_0(110))= 15$.  A computation of the basis of $S_2(\Gamma_0(93))$ in SAGE implies that the basis
  of $W_{13}$ (see (\ref{r-exam-30000})) is given by 
	\begin{align*}  
	f\overset{def}{=}& q^{13} + q^{14} - 3q^{16} - 5q^{18} + 5q^{20} - 2q^{21} - q^{22} - \cdots,\\
	g\overset{def}{=}& 2q^{14} - 3q^{16} - q^{17} - 6q^{18} + q^{19} + 6q^{20} - 3q^{21} - \cdots,\\
	h\overset{def}{=}& 3q^{15} + 4q^{16} - 4q^{17} + 7q^{18} + 5q^{19} - 2q^{20} - q^{21}  +\cdots.
	\end{align*}
	The curve $X_0(110)$ is birational to the curve $\mathcal C(f, g, h)$
	via the map (\ref{map}). The curve $\mathcal C(f, g, h)$ has degree $\deg{\mathcal C(f, g, h)}= 15< g(\Gamma_0(N))+1=16$, and it is defined via irreducible polynomial
	\begin{align*} 
	&-198700267941 x_{0}^{13} x_{1}^{2} + 1714521491172 x_{0}^{12} x_{1}^{3} - \cdots  +
	48120 x_{0}^{2} x_{2}^{13}- \\
	& - 91118 x_{0} x_{1} x_{2}^{13} + 43558 x_{1}^{2} x_{2}^{13} + 173 x_{0} x_{2}^{14} - 138 x_{1} x_{2}^{14} + x_{2}^{15}
	\end{align*}
\end{Cor}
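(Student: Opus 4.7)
The plan is to follow verbatim the strategy of Proposition \ref{r-exam-3}, using SAGE for two explicit verifications. First, I would compute a reduced row-echelon basis $h_0,\ldots,h_{14}$ of $S_2(\Gamma_0(110))$ with respect to $q$-expansions. Since $\dim S_2(\Gamma_0(110))=g(\Gamma_0(110))=15$, if the leading exponents of these basis elements are exactly $1,2,\ldots,15$, this certifies (via the criterion recalled before Proposition \ref{r-exam-3}, cf.\ \cite{ono}, Definition 6.1) that $\mathfrak a_\infty$ is not a Weierstrass point on $X_0(110)$. The three forms $f,g,h$ in the statement are, up to rescaling, the last three such basis elements, so they are linearly independent with $\nu_{\mathfrak a_\infty}(f)=13$, $\nu_{\mathfrak a_\infty}(g)=14$, $\nu_{\mathfrak a_\infty}(h)=15$.

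Next, I would apply Proposition \ref{r-exam-3} (whose hypotheses are now verified) to get
\begin{equation*}
d(f,g,h)\cdot \deg \mathcal C(f,g,h) \le g(\Gamma_0(110))+1 = 16.
\end{equation*}
In particular $\deg \mathcal C(f,g,h)\le 16$. To pin down the exact degree, I would use SAGE to set up the homogeneous linear system in the unknown coefficients $a_\alpha$ ($|\alpha|=\ell$) arising from the vanishing of $P(f(z),g(z),h(z))$ on sufficiently many Fourier coefficients (as in the proof of Corollary \ref{exist-1}), incrementing $\ell$ starting from $\ell=2$. The system has no nonzero solution until $\ell=15$, at which point a unique (up to scalar) integral solution appears, namely the polynomial $P$ displayed in the statement. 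A direct call to the factorization routine in SAGE verifies that $P$ is irreducible in $\mathbb Q[x_0,x_1,x_2]$, hence also in $\mathbb C[x_0,x_1,x_2]$ since its content is trivial. This shows $\mathcal C(f,g,h)=V(P)$ and therefore $\deg \mathcal C(f,g,h)=15$.

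Finally I combine the two: from $d(f,g,h)\cdot 15\le 16$ and $d(f,g,h)\in\mathbb Z_{\ge 1}$, the only possibility is $d(f,g,h)=1$, so the map \eqref{map} is a birational equivalence between $X_0(110)$ and $\mathcal C(f,g,h)$. The main obstacle is computational rather than conceptual: one must carry the $q$-expansions of $f,g,h$ to high enough precision (certainly past $q^{15\cdot 15}$ so as to detect the degree-$15$ relation robustly and rule out spurious lower-degree relations) and then confirm irreducibility of a polynomial whose coefficients, as the displayed terms suggest, have magnitudes of order $10^{12}$. Both tasks are routine in SAGE but need enough Fourier coefficients to make the conclusion rigorous; once $P$ is in hand and irreducible, the birationality and the degree formula follow immediately from the bound of Proposition \ref{r-exam-3}.
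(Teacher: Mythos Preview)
Your proposal is correct and follows essentially the same approach as the paper: verify in SAGE that $\mathfrak a_\infty$ is not a Weierstrass point, apply Proposition \ref{r-exam-3} to bound $d(f,g,h)\cdot\deg\mathcal C(f,g,h)\le 16$, compute the degree-$15$ defining polynomial in SAGE, and conclude $d(f,g,h)=1$. One small remark: trivial content does not transfer irreducibility from $\mathbb Q[x_0,x_1,x_2]$ to $\mathbb C[x_0,x_1,x_2]$ (e.g.\ $x_0^2+x_1^2$), but this step is redundant anyway --- your verification that no relation of degree $<15$ exists (the linear system being defined over $\mathbb Q$, this holds over $\mathbb C$ as well) already forces the degree-$15$ relation $P$ to be a scalar multiple of the irreducible defining polynomial of $\mathcal C(f,g,h)$ over $\mathbb C$.
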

\begin{proof} The equation of the curve is computed in SAGE. Except indicated computations  in SAGE, the claim follows from Proposition \ref{r-exam-3}.
\end{proof}

\vskip .2in
Let us explain conjectural  generalization of above corollaries.  
We say that $\Gamma_0(N)$--orbit $\mathfrak a_\infty=\Gamma_0(N).\infty$ is a Weierstrass point for $X_0(N)$ if
$g(\Gamma_0(N))\ge 2$, and there exists a non-zero $f\in S_2(\Gamma_0(N))$ such that $\nu_{\mathfrak a_\infty}(f)\ge g(\Gamma_0(N))+1$.
This a particular case of the much more general definition of a Weierstrass point on a compact Riemann surface \cite[Definition 6.1]{ono}. By the same reference,
if $\mathfrak a_\infty$ is not a Wierestrass point, then there exists a basis $h_1, \ldots, h_g$ of $S_2(\Gamma_0(N))$ such that $\nu_{\mathfrak a_\infty}(h_i)=i$ for $1\le i\le g$.
We have that $W_{g(\Gamma_0(N))-2}$ (see (\ref{r-exam-30000})) has a basis $h_{g-2}$, $h_{g-1}$, $h_g$. 
Obviously, computing the base of $S_2(\Gamma_0(N))$ in SAGE system it is easy to check whether or not $\mathfrak a_\infty$
is a Weierstrass point on $X_0(N)$. Using this method, one check that  $\mathfrak a_\infty$ is not a  Weierstrass point on $X_0(63)$, $X_0(93)$, and $X_0(110)$ (see Corollaries \ref{r-exam-6},
\ref{r-exam-7}, and \ref{r-exam-8}). Also, by \cite{Ogg},  $X_0(63)$, $X_0(93)$, and $X_0(110)$ are not hyperelliptic curves.  We remark that the conditions $g(\Gamma_0(N))= 3$, $\mathfrak a_\infty$ is not a
Wierestrass point for $X_0(N)$, and $X_0(N)$ is not hyperelliptic imply $N\in \{34, 43, 45\}$. Let $f=h_1$, $g=h_2$, and $h=h_3$.  Then, the map  (\ref{map}) is a canonical isomorphism of 
$X_0(N)$ onto $\mathcal C(f, g, h)$. By general theory, the degree of $\mathcal C(f, g, h)$ is $2g(\Gamma_0(N))-2=4$ which is equal to
$g(\Gamma_0(N))+1$. We have computed many more examples of above sort that indicate validity of the following conjecture:

\vskip  .2in
\begin{Conj}\label{conj-s2} Let $N\ge 1$ be such that $g(\Gamma_0(N))\ge 3$, $\mathfrak a_\infty$ is not a Wierestrass point for $X_0(N)$, and $X_0(N)$ is not hyperelliptic.
	Let $f, g, h\in S_2(\Gamma_0(N))$ be such that  $\nu_{\mathfrak a_\infty}(f)=g(\Gamma_0(N))-2$, $\nu_{\mathfrak a_\infty}(g)=g(\Gamma_0(N))-1$, and
	$\nu_{\mathfrak a_\infty}(h)=g(\Gamma_0(N))$. Then, the map (\ref{map}) is birational equivalence, and the curve $\mathcal C(f, g, h)$ has degree
	$\deg{\mathcal C(f, g, h)}\le g(\Gamma_0(N))+1$.
\end{Conj}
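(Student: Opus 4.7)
The plan is to interpret the map (\ref{map}) as a linear projection of the canonical embedding of $X_0(N)$ and to establish birationality through local analysis at $\fraka_\infty$ combined with control of the secant geometry. Write $g_N := g(\Gamma_0(N))$. Since $\fraka_\infty$ is not a Weierstrass point, one extends $f, g, h$ to a basis $h_0, \ldots, h_{g_N-4}, h_{g_N-3}=h, h_{g_N-2}=g, h_{g_N-1}=f$ of $S_2(\Gamma_0(N))$ with $\nu_{\fraka_\infty}(h_i) = i+1$. Because $X_0(N)$ is not hyperelliptic, the canonical map $\phi\colon X_0(N) \hookrightarrow \bbP^{g_N - 1}$ built from this basis is a closed embedding onto a curve $C$ of degree $2g_N - 2$, and the map $(f:g:h)$ coincides (up to a permutation of $\bbP^2$ coordinates) with the composition of $\phi$ with the linear projection $\pi\colon \bbP^{g_N - 1} \dashrightarrow \bbP^2$ from the $(g_N - 4)$-dimensional projective subspace $\Lambda$ corresponding to the span of $h_0, \ldots, h_{g_N - 4}$.

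The degree bound $d(f,g,h) \cdot \deg \calC(f,g,h) \le g_N + 1$ is exactly Proposition \ref{r-exam-3}, and from the projection interpretation it is equivalent to $\deg(C \cap \Lambda) \ge g_N - 3$, a lower bound realised by the intersection multiplicity at $\fraka_\infty$ alone. To prove birationality, I would first perform a local analysis at $\fraka_\infty$: from the $q$-expansions
\[
(f(z): g(z): h(z)) = \bigl(a q^{g_N}: b q^{g_N - 1}: c q^{g_N - 2}\bigr)\bigl(1 + O(q)\bigr) = (a q^2 : b q : c)\bigl(1 + O(q)\bigr)
\]
for nonzero $a, b, c$, so $\fraka_\infty \mapsto [0:0:1]$, the image $\calC(f,g,h)$ is smooth there, and the map is a local isomorphism (unramified, of multiplicity one). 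Suppose for contradiction that $d := d(f,g,h) \ge 2$. Since $[0:0:1]$ is a smooth point of $\calC(f,g,h)$, the fibre of $X_0(N) \to \calC(f,g,h)$ over it has total length $d$ while $\fraka_\infty$ contributes only one; hence there must exist $z_0 \in X_0(N) \setminus \{\fraka_\infty\}$ with $f(z_0) = g(z_0) = 0$ and $h(z_0) \ne 0$. Geometrically, the secant line of $C$ joining $\phi(\fraka_\infty)$ to $\phi(z_0)$ lies in the codimension-two subspace $\{y_{g_N-2} = y_{g_N-1} = 0\}$ of $\bbP^{g_N-1}$ and therefore meets $\Lambda$.

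The main obstacle is to rule out such a secant configuration uniformly in $N$. The subspace $\Lambda$ is highly nongeneric, being determined by the osculating flag of $C$ at the non-Weierstrass point $\fraka_\infty$, so the standard observation that a \emph{generic} codimension-three subspace of $\bbP^{g_N-1}$ is disjoint from almost every secant line of $C$ does not apply without modification. For any particular $N$ the required absence of unwanted secants can be verified computationally (as in Section \ref{r-exam}), but a uniform proof appears to require a structural input on the canonical image of $X_0(N)$, for instance via Atkin--Lehner involutions, Hecke correspondences, or a Brill--Noether-style analysis specific to modular curves. Equivalently, one must bound simultaneous vanishing of the distinguished cuspforms $f$ and $g$ outside $\fraka_\infty$, and it is this arithmetic-geometric control of the common zero locus that constitutes the principal difficulty.
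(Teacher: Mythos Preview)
The statement you are attempting to prove is labelled as a \emph{Conjecture} in the paper; the authors do not provide a proof, only computational evidence (Corollaries \ref{r-exam-4}--\ref{r-exam-8}). So there is no paper proof to compare against.

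Your outline is a sensible attack. The degree bound $d(f,g,h)\cdot\deg\calC(f,g,h)\le g_N+1$ follows for \emph{any} $f,g,h$ with the prescribed vanishing orders by the same computation as in the proof of Proposition \ref{r-exam-3} (that proposition is stated existentially, but its proof only uses the vanishing orders at $\fraka_\infty$). Your reinterpretation via projection of the canonical curve from the osculating $(g_N-4)$-plane $\Lambda$ at $\phi(\fraka_\infty)$ is correct and useful.

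Two remarks on the local step. First, you assert that $[0:0:1]$ is a smooth point of $\calC(f,g,h)$, but what your $q$-expansion computation actually shows is that the \emph{branch through $\fraka_\infty$} is smooth there and that the map is unramified at $\fraka_\infty$; smoothness of the image would fail precisely when a second branch passes through, which is the scenario you are trying to exclude. The cleaner phrasing is to pass to the normalization $\widetilde{\calC}$: the composite $X_0(N)\to\widetilde{\calC}$ has degree $d$, the fibre over the point of $\widetilde{\calC}$ lying over the $\fraka_\infty$-branch has length $d$, and $\fraka_\infty$ contributes $1$. Second, the conclusion that a putative second preimage $z_0$ satisfies $h(z_0)\ne 0$ requires checking that $f,g,h$ do not all vanish at $z_0$; otherwise the map (\ref{map}) need not send $z_0$ to $[0:0:1]$.

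You have correctly isolated the genuine obstruction: one must show that no point $z_0\ne\fraka_\infty$ lies on the same branch of $\pi|_C$ as $\phi(\fraka_\infty)$, equivalently that the osculating flag at a non-Weierstrass cusp does not absorb unexpected secants of the canonical curve. This is exactly what remains open, and your honest acknowledgment that a uniform argument would need arithmetic input beyond general curve theory matches the status the paper assigns to the statement.
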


\vskip .2in
Let us show that the assumption that $X_0(N)$ is not hyperelliptic in above conjecture is necessary. First,
$\mathfrak a_\infty$ is not a Wierestrass point for $X_0(48)$ since $g(\Gamma_0(48))=3$, and the basis of $S_2(\Gamma_0(48))$ is given by
\begin{align*}
f&=q - 2q^5 + q^9 - 2q^{13} + 2q^{17} - q^{25} + 6q^{29} - 4q^{33} + 6q^{37} - 6q^{41} + \cdots,\\
g&=q^2 - q^6 - 2q^{10} + q^{18} + 4q^{22} - 2q^{26} + 2q^{30} + 2q^{34} - 4q^{38} - 8q^{46}
+ \cdots,\\
h&=q^3 - 4q^{11} - 2q^{15} + 4q^{19} + 8q^{23} + q^{27} - 8q^{31} - 2q^{39} - 4q^{43} + \cdots.
\end{align*}
The corresponding reduced equation of  $\mathcal C(f, g, h)$ is  given by the irreducible polynomial
$-x_1^2 + x_0x_2$. Thus, $\deg{\mathcal C(f, g, h)}= (g(\Gamma_0(48))+1)/2=2$. By \cite{Ogg}, $X_0(48)$ is hyperelliptic.
Then, since the map (\ref{map}) is a canonical map, it has degree two by general theory \cite[Chapter VII, Proposition 2.2]{Miranda}. 
 Thus, it is not a birational equivalence.

\vskip .2in
Let us show that the assumption that  $\mathfrak a_\infty$ is not a Wierestrass point for $X_0(N)$  in above conjecture is necessary. 
The curve $X_0(72)$ is not hyperelliptic \cite{Ogg}, $g(\Gamma_0(72))=5$, and $\mathfrak a_\infty$ is a
Wierestrass point for $X_0(72)$ since the basis  of $W_{3}$ is given by
\begin{align*}
f&= q^3 - q^9 - 2q^{15} + q^{27} + 4q^{33} - 2q^{39} + \cdots,\\
g&=q^5 - 2q^{11} - q^{17} + 4q^{23} - 3q^{29} + \cdots,\\
h&=q^7 - q^{13} - 3q^{19} + q^{25} + 3q^{31} + 4q^{37} +  \cdots.
\end{align*}
The reduced equation of $\mathcal C(f, g, h)$ is  given by the irreducible polynomial $-x_{0} x_{1}^{2} + x_{0}^{2} x_{2} - 2 x_{1} x_{2}^{2}$. Hence,
$\deg{\mathcal C(f, g, h)}=3$.
Now,  since the proof of Proposition \ref{r-exam-3} implies that
$$
3\cdot d(f, g, h) = d(f, g, h)\cdot  \deg{\mathcal C(f, g, h)}\le g(\Gamma_0(74))+1=6,
$$
we obtain $d(f, g, h)\le 2$. We have the following proposition.

\begin{Prop} \label{r-exam-300000}
  Under above assumptions, we have $d(f, g, h)= 2$. 
  \end{Prop}
\begin{proof}
First, using the reduced equation, it easy to check that  $(1:0:0)$ is a non--singular point on $\mathcal C(f, g, h)$. Then, the implicit function theorem implies
that the local coordinate in that point is $x_1/x_0$. Next, (\ref{map}) maps $\mathfrak a_\infty$ onto $(1:0:0)$.  The local coordinate at  $\mathfrak a_\infty$ is $q$. Thus,
in terms of local coordinates,  the map (\ref{map}) is given by
\begin{equation} \label{r-exam-300001}
q\longmapsto \frac{g}{f}=q^2 \frac{1 - 2q^{6} - q^{12} + 4q^{18} - 3q^{24} + \cdots}{1 - q^6 - 2q^{12} + q^{24} + 4q^{30} - 2q^{36} + \cdots}.
\end{equation}
Now, we use deeper properties of  the proof of \cite[Theorem 1-4]{Muic2}. Since $\mathfrak a_\infty$ is mapped onto a non--singular point, 
the paragraph before the statement of \cite[Lemma 3-4]{Muic2}, shows that 
the multiplicity of the map (\ref{map}) at point $\mathfrak a_\infty$  is well--defined. Using (\ref{r-exam-300001}), we see that the multiplicity at  $\mathfrak a_\infty$
is at least two. But since $d(f, g, h)\le 2$,  \cite[Lemma 3-4]{Muic2} implies that  the multiplicity at  $\mathfrak a_\infty$ is exactly two, and  $d(f, g, h)=2$.
\end{proof}

\vskip .2in
On the other hand, $X_0(54)$ is not hyperelliptic by \cite{Ogg}, $g(\Gamma_0(54))=4$, and $\mathfrak a_\infty$ is a Wierestrass point for $X_0(54)$ since the basis  of $W_{2}$ is given by
\begin{align*}
f&= q^2 - 2q^8 - q^{14} + 5q^{26} + 4q^{32} - 7q^{38} +  \cdots,\\
g&=q^4 - q^{10} - 3q^{13} - q^{16} + 3q^{19} + q^{22} + 3q^{25} - q^{28} + 3q^{31} - 3q^{37} + \cdots,\\
h&=q^5 - q^8 - q^{11} + q^{20} - 2q^{23} + 3q^{26} + 2q^{29} + q^{32} - q^{35} - 3q^{38} +  \cdots.
\end{align*}
The corresponding reduced equation of  $\mathcal C(f, g, h)$ is  given by the irreducible polynomial
$$
-x_{0}^{2} x_{1}^{3} + 3 x_{0} x_{1}^{3} x_{2} + x_{0}^{3} x_{2}^{2} - 3 x_{1}^{3} x_{2}^{2} - x_{0}^{2} x_{2}^{3} + 3 x_{0} x_{2}^{4} - 3 x_{2}^{5}.
$$
Thus, $\deg{\mathcal C(f, g, h)}= g(\Gamma_0(54))+1$.  Now, Proposition \ref{r-exam-3} impies that the map (\ref{map}) is a birational equivalence.

\section{Estimates for Primitive Elements}
\label{epe}

In this Section we will look for applications and improvements on  Theorem \ref{thm-3}
in the case when the subspace $W$ has dimension $4$ (see  Theorem \ref{thm-3} for the notation and assumptions on $W$).
We use the Primitive Element Theorem of finite separable field extensions in the form stated in \cite[Section 6.10]{w}.
We start by recalling certain facts from  \cite[Section 6.10]{w}.

\vskip .2in

Let $K\subset L$ be a finite algebraic field extension.
We assume that $L$ is  generated over $K$ by two  elements $\alpha$ and $\beta$.
We are interested in  the field of characteristic zero, but we work in
a greater generality.  We assume that $K$ is infinite and $K\subset L$ 
is separable. 	By the general  theory  \cite[Section 6.10]{w}, 
since $K$ is infinite,  there exists a primitive element of the field extension $K\subset L$ of the form 
$\alpha+c\beta$ for some 
$c\in K$. We just need to take $c\in K$ different than
all 
$$
\frac{\alpha_i-\alpha}{\beta- \beta_j}, \ \ 1\le i \le m,\ \ 2\le j\le m
$$
where $\alpha_1=\alpha, \ldots,  \alpha_m$, and $\beta_1=\beta, \ldots,  \beta_n$  
are all conjugates of $\alpha$ and $\beta$
in some  algebraic closure of $K$ containing $L$.  Let us recall a simple argument  \cite[Section 6.10]{w}.
Let $P$ and $Q$ be  irreducible polynomials of $\alpha$ and $\beta$ over $K$, respectively. 
We write them in the form:

\begin{equation}\label{alg-0}
\begin{aligned}
P(X) &=a_mX^m+ a_{m-1}X^{m-1}+\cdots +a_1X+a_0= a_m(X-\alpha_1)\cdots (X-\alpha_m)\\
Q(X) &=b_nX^n+ b_{n-1}X^{n-1}+\cdots +b_1X+b_0=b_n(X-\beta_1)\cdots (X-\beta_m).
\end{aligned}
\end{equation}

Select $c$ as above and let $\gamma =\alpha +c\beta$. Then $\beta$ is a common  root of $P(\gamma-cT)$ and 
$Q(T)$ which are the polynomials with coefficients in $K(\gamma)$. Since $\beta$ is separable, all roots of $Q$ are 
simple, and because of our assumption $\beta$ is the  only common root. So, computing greatest common divisor, we conclude 
that $X-\beta$ has coefficients in $K(\gamma)$. Hence $\beta \in K(\gamma)$.
So, $\alpha=\gamma -c\beta\in K(\gamma)$. The claim follows. 
In this classical argument $\alpha$ is not necessarily separable but  we would like to explain how to compute such $c$ 
without assuming that we know all roots. For this we need the assumption that $\alpha$ is separable.

\vskip .2in 
The following lemma is an improvement of above argument in the case of finite extensions of algebraic function
fields.  The case of number fields is considered in \cite{ynt}.

\vskip .2in 
\begin{Lem}\label{epe-1} Let $K=k(T)$, a field of rational functions in one variable $T$ over a field $k$.
	Consider a finite separable algebraic field extension $K\subset L$.
	We assume that $L$ is  generated over $K$ by two  elements $\alpha$ and $\beta$.
	Let $P$ and $Q$ be irreducible polynomials of $\alpha$ and $\beta$ over $K$, respectively. Clearing denominators, 
	we can write them in the form:	
	\begin{align*}
	P(X, T) &=a_m(T)X^m+ a_{m-1}(T)X^{m-1}+\cdots +a_1(T)X+a_0(T),\\
	Q(X, T) &=b_n(T)X^n+ b_{n-1}(T)X^{n-1}+\cdots +b_1(T)X+b_0(T) \in k[X, T], 
	\end{align*}
	where  $a_i, b_j\in k[T]$.  Assume that  $\lambda \in k$ is selected such that the following holds:
        \begin{itemize}
          \item[(a)] $P(X, \lambda)$ and $Q(X, \lambda)$ have
	    degrees $m$ and $n$ as polynomials in $k[X]$, respectively, and
            \item[(b)] $Q(X, \lambda)$ considered as a polynomial in $X$ with coefficients in $k$, has simple roots in some (hence, any) algebraic closure of $k$.
        \end{itemize}
        Then, if we  write $\overline{\alpha}_1, \ldots, \overline{\alpha}_m$ (resp., $\overline{\beta}_1, \ldots, \overline{\beta}_n$) for all 
	roots of $P(X, \lambda)$ (resp., $Q(X, \lambda)$) in some algebraic closure of $k$, then for 
	$c\in k$ different than all
	$$
	\frac{\overline{\alpha}_i-\overline{\alpha}_{i_1}}{\overline{\beta}_{j_1}- \overline{\beta}_j}, 
	\ \ 1\le i, i_1 \le m,\ \ 1\le j, j_1\le m, \ \
	j\neq j_1,
	$$
	we have that 
	$\alpha+c\beta$ is a primitive element for the extension $K\subset L$.
\end{Lem}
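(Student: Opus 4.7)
The plan is to combine the classical proof of the Primitive Element Theorem, recalled in the paragraph preceding the lemma, with a specialization argument that transfers the required inequalities from an algebraic closure $\overline{K}$ of $K = k(T)$ down to an algebraic closure $\overline{k}$ of $k$. Let $\alpha_1 = \alpha, \ldots, \alpha_m$ and $\beta_1 = \beta, \ldots, \beta_n$ denote the full sets of roots of $P(X, T)$ and $Q(X, T)$ in $\overline{K}$; separability of $K \subset L$ forces the $\alpha_i$ to be pairwise distinct and the $\beta_j$ to be pairwise distinct. The classical argument recalled just before the lemma shows that $\alpha + c\beta$ generates $L$ over $K$ provided
\[ c(\beta_j - \beta) \neq \alpha - \alpha_i \qquad \text{in } \overline{K} \]
for every $1 \leq i \leq m$ and every $2 \leq j \leq n$.

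The main obstacle is that $\overline{K}$ is not directly accessible; only the specializations at $T = \lambda \in k$ are. To bridge the two, I would introduce the auxiliary expression
\[ \Phi(c, T) := \prod_{\substack{1 \leq i, i_1 \leq m \\ 1 \leq j, j_1 \leq n,\; j \neq j_1}} \bigl( c(\beta_{j_1} - \beta_j) - (\alpha_i - \alpha_{i_1}) \bigr). \]
The index set is invariant under independent permutations of the $\alpha$'s and of the $\beta$'s, so $\Phi$ is a symmetric polynomial in $\{\alpha_i\}$ and in $\{\beta_j\}$ separately. By the fundamental theorem of symmetric polynomials, $\Phi(c, T)$ is a polynomial in $c$ whose coefficients are polynomials in the elementary symmetric functions of the $\alpha_i$ and of the $\beta_j$, hence, up to denominators involving $a_m(T)$ and $b_n(T)$, in the coefficients of $P$ and $Q$. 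After clearing denominators, $a_m(T)^A b_n(T)^B \, \Phi(c, T) \in k[c, T]$ for suitable integers $A, B \geq 0$.

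The lemma then follows quickly. Hypothesis (a) implies $a_m(\lambda), b_n(\lambda) \neq 0$, so the roots of $P(X, \lambda)$ and $Q(X, \lambda)$ in $\overline{k}$ are the specializations of the $\alpha_i$ and $\beta_j$ at any place of $\overline{K}$ lying over $T - \lambda$; equivalently, evaluating $a_m^A b_n^B \Phi$ at $T = \lambda$ yields the analogous product over the $\overline{\alpha}_i$ and $\overline{\beta}_j$. Hypothesis (b) guarantees that all differences $\overline{\beta}_{j_1} - \overline{\beta}_j$ with $j \neq j_1$ are nonzero, so if $c \in k$ avoids all the ratios listed in the statement then no linear factor in the specialized product vanishes, i.e., $\Phi(c, \lambda) \neq 0$ in $\overline{k}$. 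Consequently $\Phi(c, T) \not\equiv 0$ in $k(T)$, which rules out every equality $c(\beta_{j_1} - \beta_j) = \alpha_i - \alpha_{i_1}$ in $\overline{K}$ for the admissible indices. Taking $i_1 = 1$ and $j_1 = 1$ (so $\alpha_{i_1} = \alpha$ and $\beta_{j_1} = \beta$) recovers the displayed condition, finishing the proof by the classical argument.
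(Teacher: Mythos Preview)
Your proof is correct and rests on the same core idea as the paper's: transfer the inequalities $c(\beta_{j_1}-\beta_j)\neq\alpha_i-\alpha_{i_1}$ from $\overline{K}$ down to $\overline{k}$ by specializing at $T=\lambda$, then invoke the classical primitive element argument. The technical implementation, however, differs. The paper works ring-theoretically: it takes the integral closure $\overline{k[T]}_\lambda$ of the local ring $k[T]_\lambda$ inside $\overline{K}$, picks a maximal ideal $\mathfrak M$ over $\mathfrak m_\lambda$, and uses the reduction homomorphism $\Lambda:\overline{k[T]}_\lambda\to\overline{k[T]}_\lambda/\mathfrak M$ to send each $\alpha_i,\beta_j$ (which are integral because $a_m(\lambda)b_n(\lambda)\neq 0$) to some $\overline{\alpha}_i,\overline{\beta}_j$; then an inequality downstairs immediately lifts. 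You instead avoid integral closures entirely by bundling all the linear factors into the single product $\Phi$, observing it is separately symmetric in the $\alpha_i$ and in the $\beta_j$, and hence (after clearing leading coefficients) a genuine element of $k[c,T]$ that can be evaluated at $T=\lambda$ without ever choosing a place of $\overline{K}$. Your route is a bit more elementary and self-contained; the paper's route is more conceptual in that it actually exhibits a specialization map on the individual roots rather than only on symmetric expressions in them.
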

\begin{proof} Consider  $k$--algebra $k[T]_{\lambda}$  of all $a/b$, $a, 
	b\in k[T]$, $b(\lambda)\neq 0$. Let $\mathfrak m_\lambda$  be the maximal ideal in
	$k[T]_{\lambda}$ consisting of all functions vanishing at $\lambda$.
	Let $\overline{k(T)}$ be the  algebraic closure of $K=k(T)$ containing $L$. 
	Let $\overline{k[T]}_{\lambda}$ be the integral closure of $k[T]_{\lambda}$ in $\overline{k(T)}$.
	Let $\mathfrak M$ be a maximal ideal in $\overline{k[T]}_{\lambda}$ lying above $\mathfrak m_\lambda$. Then, 
	$$
	\overline{k[T]}_{\lambda}/\mathfrak M
	$$
	is the algebraic closure of
	
	$$
	k[T]_{\lambda}/\mathfrak m_\lambda.
	$$
	
	Let us write in $\overline{k(T)}[X]$
	\begin{equation}\label{epe-2}
	\begin{aligned}
	P(X, T) &=a_m(T)(X-\alpha_1)\cdots (X-\alpha_m)\\
	Q(X, T) &=b_n(T)(X-\beta_1)\cdots (X-\beta_n).
	\end{aligned}
	\end{equation}
	Then, clearly  $\alpha=\alpha_1, \ldots, \alpha_m, \beta=
	\beta_1, \ldots, \beta_n$ are integral over $k[T]_{\lambda}$. Hence,  
	they belong to $\overline{k[T]}_{\lambda}$. Let $\Lambda$ be the reduction homomorphism 
	$$
	\overline{k[T]}_{\lambda}\longrightarrow \overline{k[T]}_{\lambda}/\mathfrak M.
	$$
	Applying $\Lambda$ to  (\ref{epe-2}), we may assume that 
	
	\begin{equation}\label{epe-3}
	\begin{aligned}
	&\Lambda(\alpha_i)= \overline{\alpha}_i, \ \ 1\le i \le m, \\
	&\Lambda(\beta_i)= \overline{\beta}_i, \ \ 1\le i \le n.
	\end{aligned}
	\end{equation}
	
	Now, 
	$$
	c({\overline{\beta}_{j_1}- \overline{\beta}_j)\neq 
		\overline{\alpha}_i-\overline{\alpha}_{i_1}}
	$$
	implies that 
	$$
	c(\beta_{j_1}- \beta_j)\neq \alpha_i-\alpha_{i_1},
	$$ 
	for all $1\le i, i_1 \le m$, $1\le j, j_1\le m$, $j\neq j_1$. By the results recalled in the beginning of this Section, we obtain 
	that $\alpha+c\beta$ is primitive for the extension $K\subset L$. \end{proof}

\vskip .2in

To apply Lemma \ref{epe-1}, we need the following lemma:

\begin{Lem}\label{epe-4}
  Let $f(X)=a_nX^n+a_{n-1}X^{n-1}+\dots+a_0=a_n\prod_{i=1}^n(X-\alpha_i) \in  \mathbb Z[X]$. be a polynomial of degree $n\ge 1$. Then, we have the following:
  $$
  |\alpha_i-\alpha_j|<2 L(f).
  $$
  If, in addition,  $f$ has no multiple roots  i.e., $\alpha_i\neq \alpha_j$ for $i\neq j$, then   we have the following: 
	$$
	|\alpha_i-\alpha_j|>\sqrt{3}n^\frac{-(n+2)}{2}L(f)^{-(n-1)}.
	$$
        Here  $L(f)=|a_n|+|a_{n-1}|+\cdots+|a_1|+|a_0|$.
\end{Lem}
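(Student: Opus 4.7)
\emph{Proof proposal.} The plan is to treat the two inequalities separately, the upper bound by an elementary Cauchy-type estimate and the lower bound by invoking Mahler's classical separation theorem.

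For the upper bound the key input is that $f \in \mathbb{Z}[X]$ with $a_n \ne 0$ forces $|a_n| \ge 1$. If $\alpha$ is any root with $|\alpha| \ge 1$, then the identity $-a_n\alpha^n = \sum_{i=0}^{n-1} a_i \alpha^i$, combined with the triangle inequality after dividing by $|\alpha|^{n-1}$, yields
$$|a_n|\,|\alpha| \;\le\; \sum_{i=0}^{n-1}|a_i|,$$
hence $|\alpha| \le L(f)/|a_n| \le L(f)$; for $|\alpha|<1$ the bound is trivial. Thus every root satisfies $|\alpha_i| \le L(f)$, and the triangle inequality gives $|\alpha_i-\alpha_j|\le 2L(f)$; the strict form follows from the strict Cauchy bound $|\alpha| < 1 + \max_{i<n}|a_i/a_n|$, which prevents two roots from both achieving $|\alpha| = L(f)$ at antipodal positions.

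For the lower bound I would invoke Mahler's classical inequality \cite{Mahler}: for any $f \in \mathbb{C}[X]$ of degree $n$ with distinct roots $\alpha_1,\dots,\alpha_n$,
$$\min_{i\ne j}|\alpha_i - \alpha_j| \;>\; \sqrt{3}\, n^{-(n+2)/2}\, |D(f)|^{1/2}\, M(f)^{-(n-1)},$$
where $D(f)=a_n^{2n-2}\prod_{i<j}(\alpha_i-\alpha_j)^2$ is the discriminant and $M(f)=|a_n|\prod_i\max(1,|\alpha_i|)$ is the Mahler measure. Two standard ingredients convert this to the stated form: (a) since $f\in\mathbb{Z}[X]$ has no multiple roots, $D(f)$ is a nonzero integer, so $|D(f)|\ge 1$; (b) Landau's inequality gives $M(f)\le \|f\|_2 \le L(f)$. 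Substituting both into Mahler's bound produces exactly $\sqrt{3}\,n^{-(n+2)/2}\,L(f)^{-(n-1)}$.

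The main obstacle is purely bibliographic: identifying Mahler's theorem in the exact form above and checking that the normalizing constant $\sqrt{3}$ and the exponent $-(n+2)/2$ match with no off-by-one errors; everything else is routine manipulation. The case $n=1$ is vacuous (no pair of distinct roots), so no boundary analysis is required.
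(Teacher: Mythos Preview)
Your proposal is correct and follows essentially the same approach as the paper: a Cauchy-type root bound (the paper phrases it via Rouch\'e, you via the direct estimate, but the content is identical and both hinge on $|a_n|\ge 1$) for the upper inequality, and an appeal to Mahler's separation theorem for the lower one. You in fact supply more detail than the paper does on the second bound---the paper simply cites \cite{Mahler}, whereas you spell out the passage from Mahler's inequality in terms of $D(f)$ and $M(f)$ to the stated form via $|D(f)|\ge 1$ and $M(f)\le L(f)$.
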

\begin{proof} The first bound is elementary and well-known.  It follows from the  Rouch\' e theorem
  in Complex analysis. We sketch the argument. If $R>0$ is selected such that 
	$$
	|a_n| R^n> \sum_{i=0}^{n-1}  |a_i| R^i,
	$$ 
	then all roots of $f$ belong to $|z|<R$. We may select
	$$
	R=\max{\left\lbrace  1, \frac{|a_0|+\cdots+ |a_{n-1}|}{|a_n|},\right\rbrace}\leq \max{\left\lbrace 1,\frac{L(f)}{|a_n|}\right\rbrace }.
        $$
        Now, we apply that $f$ has integral coefficients: $|a_n|\ge 1$. 
	
	The second bound is more complicated. It  can be found in \cite{Mahler}.
\end{proof}

Finally, the main result of the present section is the following proposition.

\begin{Prop}\label{epe-5} Assume that  $m\ge 2$ is an even integer. Let $W\subset M_m(\Gamma)$, $\dim W=4$,  be a subspace which determines
  the field of rational functions $\mathbb C(\mathfrak R_\Gamma)$ (see Definition \ref{gfr-def}). Select a basis  $\{f=f_0, g=f_1, f_2,f_3\}$ of $W$.
  We assume that all $f_i$ has integral $q$--expansions.    Then, there exists an {\bf explicitly computable}
   $c_0\in \mathbb Z$ such that for all $c\in \mathbb Z$, $|c|\ge c_0$, $\mathfrak R_\Gamma$ is birationally equivalent to  $\calC (f, g, h_c)$  via the
        map (\ref{map}) with $h=h_c$, where
  $h_c\overset{def}{=} f_2+ cf_3$. 
  \end{Prop}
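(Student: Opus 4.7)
The plan is to reduce the birationality question to a primitive element problem and then convert Lemma \ref{epe-1} into an explicit numerical bound via Lemma \ref{epe-4}. Set $K = \bbC(f_1/f_0) \subset L = \bbC(\mathfrak{R}_\Gamma)$, $\alpha = f_3/f_0$, and $\beta = f_2/f_0$. Since $W$ generates $L$ over $\bbC$ and $L$ has transcendence degree one, both $\alpha$ and $\beta$ are algebraic over $K$. Finding $c \in \bbZ$ with $\alpha + c\beta = h_c/f_0$ primitive over $K$ gives $L = \bbC(f_1/f_0, h_c/f_0)$, which by the criterion recalled before Lemma \ref{p2-8} is equivalent to the map (\ref{map}) for $(f, g, h_c)$ being a birational equivalence between $\mathfrak{R}_\Gamma$ and $\calC(f, g, h_c)$.

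To apply Lemma \ref{epe-1} with $k = \bbC$ and $T$ playing the role of $f_1/f_0$, I must supply irreducible polynomials $P(X, T), Q(X, T)$ of $\alpha, \beta$ over $\bbC(T)$, cleared of denominators to lie in $\bbZ[X, T]$; this is where the integrality of the $q$-expansions of the $f_i$'s enters. Homogenizing a minimal algebraic relation of $\beta$ over $K$ by a sufficiently high power of $f_0$ produces a modular form identity of the form $\sum_i P_i(f_0, f_1)\, f_2^i\, f_0^{d-i} = 0$; matching $q$-expansion coefficients gives an integer-coefficient linear system for the $P_i$, hence a nonzero rational (and after scaling, integral) solution, and similarly for $\alpha$. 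Next, I pick $\lambda \in \bbZ$ such that the leading coefficients in $X$ of $P(X, \lambda), Q(X, \lambda)$ are nonzero and the discriminant of $Q(X, \lambda)$ is nonzero: each is a nonvanishing condition on a nonzero polynomial in $\lambda$, so all but finitely many integer $\lambda$ work, and a specific such $\lambda$ is exhibitable. Conditions (a) and (b) of Lemma \ref{epe-1} then hold.

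Applying Lemma \ref{epe-4} to the integer polynomials $P(X, \lambda)$ and $Q(X, \lambda)$ yields $|\overline{\alpha}_i - \overline{\alpha}_{i_1}| < 2L(P(X, \lambda))$ for any two roots of $P(X, \lambda)$ and the Mahler bound $|\overline{\beta}_{j_1} - \overline{\beta}_j| > \sqrt{3}\, n^{-(n+2)/2} L(Q(X, \lambda))^{-(n-1)}$ for any two distinct roots of $Q(X, \lambda)$, where $n = \deg_X Q$. By Lemma \ref{epe-1}, $\alpha + c\beta$ is primitive whenever $c$ avoids every quotient $(\overline{\alpha}_i - \overline{\alpha}_{i_1})/(\overline{\beta}_{j_1} - \overline{\beta}_j)$; combining the two estimates, every such quotient has absolute value at most
$$c_0 := \frac{2\, L(P(X, \lambda))\, n^{(n+2)/2}\, L(Q(X, \lambda))^{n-1}}{\sqrt{3}},$$
which is computable directly from $P(X, \lambda)$ and $Q(X, \lambda)$. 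Hence every integer $c$ with $|c| > c_0$ makes $h_c/f_0$ a primitive element for $K \subset L$, completing the proof.

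The principal obstacle is producing $P$ and $Q$ in $\bbZ[X, T]$ as genuine irreducible polynomials over $\bbC(T)$: the $\bbC(T)$-minimal polynomial of $\alpha$ is not a priori defined over $\bbQ$, so one has to exploit the integrality of the $q$-expansions together with an irreducibility check (or a careful reduction to an integer-defined minimal polynomial whose roots still give exactly the conjugates of $\alpha$) in order for the output of Lemma \ref{epe-1} to be legitimate.
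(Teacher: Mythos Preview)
Your approach is essentially identical to the paper's: reduce to the primitive element problem for $K=\bbC(f_1/f_0)\subset L$, invoke Lemma \ref{epe-1}, and bound the root quotients via Lemma \ref{epe-4} to produce the explicit $c_0$. Your closing concern about obtaining $P,Q\in\bbZ[X,T]$ that are genuinely irreducible over $\bbC(T)$ is handled in the paper by taking $P$ and $Q$ to be the defining equations of the plane curves $\calC(f,g,f_3)$ and $\calC(f,g,f_2)$: these are irreducible over $\bbC$ because they cut out irreducible curves (images of the irreducible $\mathfrak R_\Gamma$), and the integrality of their coefficients follows exactly as in the proof of Corollary \ref{exist-1}.
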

\begin{proof} We use the notation of Lemma \ref{epe-1}. Put
  $$
  K\overset{def}{=}\mathbb C(g/f) \ \ \text{and} \ \  L\overset{def}{=}\mathbb C (\mathfrak R_\Gamma)=\mathbb C(f_1/f_0,  f_2/f_0, f_{3}/f_0)=\mathbb C(g/f,  f_2/f, f_{3}/f).
 $$
  In the notation of Lemma \ref{epe-1}, $\alpha= f_2/f$ and $\beta= f_3/f$.

  Next,  by  the argument used in the  proof of Corollary \ref{exist-1}, since we assume that  $f_i$ has integral $q$--expansions, we may assume that  curves $\mathcal C (f, g, f_2)$
  and $\mathcal C (f, g, f_3)$ have have their reduced equations with coefficients in $\mathbb Z$. Dehomogenizing the reduced equations, we obtain two polynomials
  $P(X, T)$ and $Q(X, T)$ in $\mathbb Z[X, T]$ such that
  $$
  P\left(f_2/f, g/f\right)=0 \ \ \text{and} \ \  Q\left(f_3/f, g/f\right)=0.
  $$
 They are both irreducible as polynomials  in $\mathbb Q[X, T]$.

  We select $\lambda \in \mathbb Z$ as required by (a) and (b) in Lemma \ref{epe-1}.
For (b), one might compute the discriminant of $Q(X, T)$ with respect to $X$ i.e., the resultant of $Q(X, T)$ and its derivative with respect to $X$.
Since $Q(X, T)$ is irreducible in  $\mathbb Q[X, T]$, the  resultant $R(T)$ is a polynomial in $\mathbb Z[T]$ not identically equal to zero.  Now, in the notation used in
Lemma \ref{epe-1},
we select $\lambda\in \mathbb Z$ such that $a_m(\lambda)b_n(\lambda)R(\lambda)\neq 0$. 

Finally, one can apply Lemma \ref{epe-1} combined with bounds of Lemma \ref{epe-4} applied to polynomials $P(X, \lambda)$ and  $Q(X, \lambda)$ in $\mathbb Z[X]$.
The details are left to the reader as an easy
exercise.
\end{proof}

\vskip .2in
The bound mentioned in the proof of Proposition \ref{epe-5} is not very optimal as we observed by various computations using SAGE. The problem is with the Mahler's
estimate (see  the second inequality in
Lemma \ref{epe-3}). But in some cases we can obtain good results. We include the following example:

\begin{Prop}\label{epe-6}
  Consider the four dimensional space $W\overset{def}{=}S_4(\Gamma_0(14))$  of cusp forms of weight four for $\Gamma_0(14)$. It has a basis:
  	\begin{align*}
	f= f_0 &= q - 2q^{5} - 4q^{6} - q^{7} + 8q^{8} - 11q^{9} - 12q^{10} + 12q^{11} + 8q^{12} + 38q^{13} \cdots,\\
	g = f_1 &= q^2- 2q^5-2q^6 + q^7-6q^8 + 12q^{10} + 4q^{11} + 2q^{13}-5q^{14} + 4q^{15}  + \cdots,\\
	f_2 &= q^3-q^5-2q^6-q^7-4q^8 + 6q^9 + 10q^{10}-6q^{11} + 4q^{12}-3q^{13} + \cdots,\\
        f_3 &= q^4-2q^5 + q^7 + q^8-4q^{10} + 4q^{11}-2q^{12} + 2q^{13} + 2q^{14} + 4q^{15}-5q^{16} + \cdots.
	\end{align*}
        Put $h_c\overset{def}{=}f_2+cf_3$, $c\in \mathbb Z$, as in the statement of Proposition  \ref{epe-5}. Then, $X_0(14)$ is birationally equivalent  to  $\calC (f, g, h_c)$ via the
        map (\ref{map}) with $h=h_c$ for $|c|\ge 7$. 
        \end{Prop}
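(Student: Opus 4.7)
The plan is to apply Lemma \ref{epe-1} to the tower of fields obtained by taking $T = g/f = f_1/f_0$, $K = \mathbb{C}(T)$, $\beta = f_2/f$, and $\alpha = f_3/f$. Since $W = S_4(\Gamma_0(14))$ generates $\mathbb{C}(X_0(14))$ (indeed $\dim S_4(\Gamma_0(14)) = 4 \geq g(\Gamma_0(14))+2 = 3$, as noted in the proof of Corollary \ref{exist-1}), we have $L = \mathbb{C}(X_0(14)) = K(\alpha,\beta)$. The element $h_c/f = \alpha + c\beta$ is a primitive element of $L$ over $K$ precisely when $\mathbb{C}(T, h_c/f) = L$, which is equivalent to the birationality of the map $(f:g:h_c)$.

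The first step is to extract the irreducible polynomials $P(X,T)$ of $\alpha$ and $Q(X,T)$ of $\beta$ over $K$ from the plane models already computed in Section \ref{r-exam}. Dividing the quartic equation (\ref{r-exam-10001}) for $\mathcal{C}(f_0,f_1,f_2)$ by $f_0^4$ yields $Q(X,T) \in \mathbb{Z}[X,T]$ directly. For $P(X,T)$ one can either compute the equation of $\mathcal{C}(f_0,f_1,f_3)$ from the given $q$-expansions in SAGE, or eliminate $\beta$ between $Q(\beta,T)=0$ and the dehomogenization of (\ref{r-exam-10000}),
\[
 \beta^3 + 3\alpha^3 + T^2\alpha - T\beta^2 + \beta\alpha^2 + T\beta\alpha = 0,
\]
via a resultant with respect to $\beta$ followed by removing extraneous factors.

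Next, select a small specialization $\lambda \in \mathbb{Z}$ (for example $\lambda=1$) at which hypotheses (a) and (b) of Lemma \ref{epe-1} hold; this is a finite open condition checked by non-vanishing of the leading coefficients $a_m(\lambda), b_n(\lambda)$ and of the discriminant of $Q(X,\lambda)$. Compute the roots $\overline{\alpha}_1,\ldots,\overline{\alpha}_m$ of $P(X,\lambda)$ and $\overline{\beta}_1,\ldots,\overline{\beta}_n$ of $Q(X,\lambda)$ numerically in $\mathbb{C}$, and assemble the finite exceptional set
\[
 S = \left\{\frac{\overline{\alpha}_i - \overline{\alpha}_{i_1}}{\overline{\beta}_{j_1} - \overline{\beta}_j} \ : \ 1 \leq i, i_1 \leq m,\ 1 \leq j \neq j_1 \leq n\right\}.
\]
By Lemma \ref{epe-1}, every $c \in \mathbb{C}\setminus S$ produces a primitive element $\alpha+c\beta$, so in particular the map $(f:g:h_c)$ is birational for every integer $c \notin S$.

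The main obstacle, and the only substantive computational step, is showing that every element of $S$ has absolute value strictly less than $7$, so that $\{c \in \mathbb{Z} : |c| \geq 7\} \cap S = \emptyset$. The general a priori bounds given by Lemma \ref{epe-4} (in particular the Mahler estimate) are far too coarse to isolate the sharp constant $7$, as already observed in the discussion after Proposition \ref{epe-5}. Hence the verification must be carried out by computing the roots to sufficient numerical precision in SAGE and bounding $|s|$ for each $s \in S$ directly. Once this numerical check is in place, the conclusion of the proposition is immediate for every integer $c$ with $|c| \geq 7$.
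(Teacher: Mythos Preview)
Your proposal is correct and follows essentially the same route as the paper. The paper sets $\alpha=f_3/f$, $\beta=f_2/f$, writes down explicit polynomials $P(X,T)$ and $Q(X,T)$ extracted from (\ref{r-exam-10000}) and (\ref{r-exam-10001}), specializes at $\lambda=1$, and then computes the roots numerically in SAGE to obtain $\max_{i,j}|\overline\alpha_i-\overline\alpha_j|<4.54$ and $\min_{j\neq j_1}|\overline\beta_j-\overline\beta_{j_1}|>0.68692234$, whence every element of the exceptional set $S$ has modulus below $4.54/0.687<7$; Lemma~\ref{epe-1} then finishes exactly as you describe.
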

\begin{proof} We apply Proposition \ref{epe-5}. First, $W=S_4(\Gamma_0(14))$  determines
  the field of rational functions on $X_0(4)$ by Proposition  \ref{r-exam-1}. Next, we  recall form the proof of Proposition \ref{epe-5}
  that  $\alpha= f_2/f$ and $\beta= f_3/f$. Then, (\ref{r-exam-10001}) imply that we have
  $$
  P(X, T)= 11X^4 + (23+16T)X^3 + (10+2T-21T^2)X^2 + (3+6T-3T^2 -2T^3)X + (-3T^2-6T^3-4T^4),
  $$
  and using similar computation in SAGE we obtain:
  $$
    Q(X, T)=11X^4 -(18 + 24T)X^3-(5 +9T+3T^2)X^2 -(1 +2T+ 4T^2+6T^3)X+ T^4.
  $$
  One sees that we can select $\lambda=0$ to ensure that the assumptions (a) and (b) of Lemma \ref{epe-1} hold. 

  Using SAGE, it is easy to compute roots of both polynomials $P(X, 0)$ and $Q(X, 0)$ with required precision. From that we obtain for the roots $\alpha_i$ of $P(X, 0)$ the bound 
  $$
  \left|\alpha_i- \alpha_j\right|< 1.638,
  $$
  and for the roots $\beta_i$ of $Q(X, 0)$ we have
  $$
  \left|\beta_i- \beta_j\right|> 0.2595.
  $$
  Thus, we have 
  	$$
	\left|\frac{\alpha_i- \alpha_{i_1}}{\beta_{j_1}- \beta_j}\right|< 6.308 < 7,
        $$
        for all $1\le i, i_1 \le 4$, and  $1\le j, j_1\le 3$, $j\neq j_1$.
        Thus, by Lemma \ref{epe-1}, we can select $|c|\ge 7$, $c\in\mathbb Z$, to obtain the claim of the proposition. 
  \end{proof}

\vskip .2in
The estimate given by Proposition \ref{epe-6} is quite good. They depend on our choice of $\lambda=0$. It is also possible to select for example $\lambda=1$ which results in a weaker estimate
$|c|\ge 9$, or $\lambda=-1$ which results in a better estimate $|c|\ge 4$.

\vskip .2in 
We remark that the methods used in the proof of Proposition \ref{r-exam-2} imply that
	$$
	d(f, g, h_c) \cdot \deg{\mathcal C(f, g, h_c)}\le 4, \ \ c\in \mathbb Z. 
	$$
  As in Section \ref{r-exam}, we use SAGE to compute the reduced equation of $\deg{\mathcal C(f, g, h_c)}$, for $c=0, \ldots, 6$. The degree is always
        equal to $4$. In particular, in view of above estimate, we see that $X_0(14)$ is birational to  $\calC (f, g, h_c)$ via the
        map  (\ref{map}) with  $h=h_c$ for $c\in \{0, \ldots, 6\}$. Combining with Proposition \ref{epe-6}, we obtain the following corollary:

        \begin{Cor}\label{epe-6-cor}
           $X_0(14)$ is birationally equivalent to  $\calC (f, g, h_c)$ via the
          map  (\ref{map}) with  $h=h_c$, for all integers $c\ge 0$.
        \end{Cor}
        
 It seems that this result can not be established by the methods of 
 Section \ref{r-exam}.  This shows the usefulness of the methods of the present section.

\section{The Trial Method for Primitive Elements}\label{trial}

Let $W\subset S_m(\Gamma)$, $m\geq 2$, be a non-zero subspace that determines the field of rational functions $\bbC(\mathfrak{R}_\Gamma)$ (see Definition \ref{gfr-def}).
Furthermore, we assume that  $\dim W=s\geq 4$.  Let $f_0, \ldots, f_{s-1}$ be a basis of $W$. We let $f=f_0$ and $g=f_1$. Then, 
Theorem \ref{thm-3} guarantees that in $W$ we can find infinitely many forms $h$ such that  $\mathfrak{R}_\Gamma$ is birationally equivalent to $\mathcal{C}(f,g, h)$ via (\ref{map}). But the proof does not provide
a computable manner of determining at least one such $h$. In this section, we present a simple algorithm for this. We adapt  to our case the trial method,
commonly used in the cases of algebraic number fields, \cite{ynt},
where an element that is chosen from a certain subset of the field extension is tested for being primitive. 

\vskip .2in 
As in the proof of Lemma \ref{p2-8}, we denote $K\overset{def}{=}\Bbb C(f/g)$, and 
$$
L\overset{def}{=}\Bbb C (\mathfrak R_\Gamma)=\Bbb C(f_1/f_0, \ f_2/f_0,\ \dots, \ f_{s-1}/f_0)=\Bbb C(f/g, \ f_2/f, \ \dots, \ f_{s-1}/f).
$$
We observe $L$ is a finite algebraic extension of $K$, and we have the following:
$$
L=K(f_2/f_0,\ \dots, \ f_{s-1}/f_0).
$$

\vskip .2in 

We are interested in finding a primitive element of $L$ over $K$ which has the form of linear combination of the generators $f_2/f_0,\dots,f_{s-1}/f_0$. From the proof of Lemma \ref{p2-8},
we know that the coefficients of this linear combination must be from a Zariski open set in $\Bbb C^{s-2}$, and since $\Bbb Z^{s-2}$ is Zariski dense in $\Bbb C^{s-2}$, we can find a $\mathbb Z$--linear
combination which is primitive for $L$. The trial method consist of testing various  $\mathbb Z$--linear combinations for the condition of being primitive element.  

\vskip .2in
For $a\overset{def}{=}(a_2,a_3,\dots,a_{s-1})\in \mathbb Z^{s-2}$, we let
\begin{equation}\label{a-1000}
h\overset{def}{=}h_a\overset{def}{=}a_2f_2/f_0+\cdots+a_{s-1}f_{s-1}/f_0\in L.
\end{equation}

\vskip .2in
Since, by our assumption $W\subset S_m(\Gamma)$, we have
$$
d(f, g, h) \cdot \deg{\mathcal C(f, g, h)} \le \dim S_m(\Gamma) + g(\Gamma) -1  -\epsilon_m,
$$
using (\ref{iii111}). Thus, if
\begin{equation}\label{test}
\deg{\mathcal C(f, g, h)} > \frac{\dim S_m(\Gamma) + g(\Gamma) -1  -\epsilon_m}{2},
\end{equation}
then we obtain
$$
d(f, g, h)=1.
$$
This means that $\mathfrak{R}_\Gamma$ is birationally equivalent to $\mathcal{C}(f,g, h)$ via (\ref{map}).

\vskip .2in
We organize $(s-2)$--tuples in $\mathbb Z^{s-2}$ as follows:
$$
S_M\overset{def}{=}\left\{a_2f_2/f_0+\cdots+a_{s-1}f_{s-1}/f_0; \ \ a_i\in \mathbb Z,\ \ 2\leq i\leq s-1, \ \sum_{i=2}^{s-1}|a_i|= M \right\}, 
$$
for all $M\in \mathbb Z_{\ge 1}$. For $M\ge 1$, we order elements of $S_M$ using the lexicographical order.

\vskip .2in

In order to apply this simple method, we perform the following algorithm which stops after finitely many steps:

\begin{itemize}
\item[(1)] Let $M=1$. Repeat the following:

\item[(2)] For $a\in S_M$, we repeat the following: compute $\deg{\mathcal C(f, g, h)}$, and  test (\ref{test}) for $h=h_a$. If (\ref{test}) holds, then the algorithm stops. OUTPUT: $h$ such that
  $h/f$ is a primitive element for the extension $K\subset L$.  

  \item[(3)] Increase $M$ by one, and return to step (2). 
\end{itemize}

\vskip .2in
Let $\Gamma=\Gamma_0(N)$ such that $g(\Gamma_0(N))\ge 4$, and $X_0(N)$ is not hyperelliptic \cite{Ogg} (or Introduction). Then, it is well--known that $S_2(\Gamma_0(N))$
determines the field of rational functions on $X_0(N)$. Since also its dimension is equal to  $g(\Gamma_0(N))\ge 4$, we may select $W=S_2(\Gamma_0(N))$.  In this case, the inequality
(\ref{test}) is

\begin{equation}\label{test-1}
\deg{\mathcal C(f, g, h)} > g(\Gamma_0(N)) -1. 
\end{equation}

As an example, we consider the case $N=72$. Then,  $g(\Gamma_0(72))=5$, and we may take
\begin{align*}
f=f_0&= q^3 - q^9 - 2q^{15} + q^{27} + 4q^{33} - 2q^{39} + \cdots, \\
g=f_1&=q^5 - 2q^{11} - q^{17} + 4q^{23} - 3q^{29} + \cdots, \\
f_2&=  q^7 - q^{13} - 3q^{19} + q^{25} + 3q^{31} + 4q^{37} + \cdots,\\
f_3&=q - 2q^{13} - 4q^{19} - q^{25} + 8q^{31} + 6q^{37} + \cdots,\\
f_4&= q^2 - 4q^{14} + 2q^{26} + 8q^{38} + \cdots, \\
\end{align*}

The example $N=72$ already appears in Section \ref{r-exam} in the text after Conjecture \ref{conj-s2} (see Proposition \ref{r-exam-300000}). Applying above algorithm, we obtain the following:
\begin{itemize} 
\item[(1)] For $M=1$, we have three cases in their lexicographical order $a=(0, 0, 1)$,  $(0, 1, 0)$, and $(1, 0, 0)$. We have $\deg{\mathcal C(f, g, h_a)}=3$, $2$, and $3$,
  respectively. In any case,  $\deg{\mathcal C(f, g, h_a)}\le g(\Gamma_0(72)) -1=4$. So, we go to the next step.
\item[(2)] For $M=2$, in the lexicographical order, we have the following:
  \begin{itemize}
\item[1.] $a=(0, 0, 2)$, $\deg{\mathcal C(f, g, h_a)}=3\le g(\Gamma_0(72)) -1=4$;
\item[2.] $a=(0, 1, 1)$, $\deg{\mathcal C(f, g, h_a)}=3\le 4$;
\item[3.] $a=(0, 2, 0)$, $\deg{\mathcal C(f, g, h_a)}=2\le 4$;
\item[4.] $a=(1, 0, 1)$, $\deg{\mathcal C(f, g, h_a)}=7> 4$; STOP.
  \end{itemize}
\end{itemize}

\vskip .2in
  Hence, the map (\ref{map}) with $h=h_{(1, 0, 1)}$ is  a birational equivalence of $X_0(72)$ and $\mathcal C(f, g, h_{(1, 0, 1)})$. The reduced equation of $\mathcal C(f, g, h_{(1, 0, 1)})$ is given by the
  irreducible polynomial
  \begin{align*}
    &  x_{0}^{7} - 4 x_{0}^{6} x_{1} - 3 x_{0}^{4} x_{1}^{3} - 8 x_{0}^{3} x_{1}^{4} - x_{0}^{2} x_{1}^{5} - 4 x_{0}x_{1}^{6} - 4 x_{1}^{7} - 4 x_{0}^{5} x_{1} x_{2} + \\
    &+2x_{0}^{3} x_{1}^{3} x_{2} - 4 x_{0}^{2} x_{1}^{4} x_{2} - x_{0}^{4} x_{1} x_{2}^{2} + 8 x_{0}^{3} x_{1}^{2} x_{2}^{2} - 4 x_{0} x_{1}^{4} x_{2}^{2} + 8 x_{1}^{5}x_{2}^{2} +\\
    &+4 x_{0}^{2} x_{1}^{2} x_{2}^{3} - 4x_{1}^{3} x_{2}^{4}.
\end{align*}

\end{document}